\RequirePackage{fix-cm}
\documentclass{svjour3} 
\smartqed 
\usepackage{tabularx,booktabs}
\usepackage{color}
\usepackage{lscape}
\setcounter{page}{1}
\usepackage{latexsym}
\usepackage{rotating}
\usepackage{amsfonts,amssymb,enumerate}
\usepackage{threeparttable}
\usepackage{graphicx}
\usepackage{multirow}
\usepackage[ruled, vlined, algonl]{algorithm2e}
\usepackage{bm}
\usepackage{galois}
\usepackage{amsmath}

\DeclareGraphicsRule{.jpg}{eps}{.bb}{}
\counterwithout{equation}{section}
\newenvironment{proof}{\noindent{\em Proof:}}{\quad \hfill$\Box$\vspace{2ex}}

\DeclareMathOperator*{\argmin}{arg\, min}

\DeclareMathOperator*{\sgn}{sgn}
\DeclareMathOperator*{\dom}{dom}

\newcommand{\mtrx}[1]{\mathsf{#1}}

\newcommand{\mA}{\mtrx{A}}
\newcommand{\mB}{\mtrx{B}}
\newcommand{\mD}{\mtrx{D}}
\newcommand{\mE}{\mtrx{E}}

\newcommand{\mH}{\mtrx{H}}
\newcommand{\mI}{\mtrx{\mathrm{I}}}

\newcommand{\mP}{\mtrx{P}}

\newcommand{\vect}[1]{\bm{#1}}

\newcommand{\va}{\vect{a}}
\newcommand{\vb}{\vect{b}}
\newcommand{\vc}{\vect{c}}
\newcommand{\vd}{\vect{d}}
\newcommand{\ve}{\vect{e}}

\newcommand{\vp}{\vect{p}}
\newcommand{\vq}{\vect{q}}
\newcommand{\vs}{\vect{s}}

\newcommand{\vu}{\vect{u}}
\newcommand{\vv}{\vect{v}}
\newcommand{\vw}{\vect{w}}
\newcommand{\vx}{\vect{x}}
\newcommand{\vy}{\vect{y}}

\newcommand{\vz}{\vect{z}}

\newcommand{\vzero}{\vect{0}}

\usepackage{enumitem, xcolor, arydshln}
\usepackage{caption}
\usepackage{subcaption}

\newcommand{\R}{\mathbb{R}}

\newenvironment{bluetext}{\color{blue}}{\ignorespacesafterend}

\begin{document}

\title{Sparse Recovery: The Square of $\ell_1/\ell_2$ Norms 
}
\author{Jianqing Jia         \and
       Ashley Prater-Bennette \and
       Lixin Shen           \and
       Erin E. Tripp
}

\institute{Jianqing Jia \at Department of Mathematics, Syracuse University, Syracuse, NY 13244. 
              \email{jjia10@syr.edu}           
           \and
           Ashley Prater-Bennette \at Air Force Research Laboratory, Rome, NY 13441. 
           \email{ashley.prater-bennette@us.af.mil}           
           \and
           Lixin Shen \at   Department of Mathematics, Syracuse University, Syracuse, NY 13244. 
              \email{lshen03@syr.edu}           
           \and
           Erin E.Tripp \at Hamilton College, Clinton, NY 13323. 
           \email{etripp@hamilton.edu}     
}

\date{Received: date / Accepted: date}

\maketitle

\begin{abstract}
This paper introduces a nonconvex approach for sparse signal recovery, proposing a novel model termed the $\tau_2$-model, which utilizes the squared $\ell_1/\ell_2$ norms for this purpose. Our model offers an advancement over the $\ell_0$ norm, which is often computationally intractable and less effective in practical scenarios. Grounded in the concept of effective sparsity, our approach robustly measures the number of significant coordinates in a signal, making it a powerful alternative for sparse signal estimation. The $\tau_2$-model is particularly advantageous due to its computational efficiency and practical applicability.

We detail two accompanying algorithms based on Dinkelbach’s procedure and a difference of convex functions strategy. The first algorithm views the model as a linear-constrained quadratic programming problem in noiseless scenarios and as a quadratic-constrained quadratic programming problem in noisy scenarios. The second algorithm, capable of handling both noiseless and noisy cases, is based on the alternating direction linearized proximal method of multipliers. We also explore the model's properties, including the existence of solutions under certain conditions, and discuss the convergence properties of the algorithms. Numerical experiments with various sensing matrices validate the effectiveness of our proposed model.
\keywords{Sparsity recovery \and $\ell_1/\ell_2$ norms \and Dinkelbach's procedure }
\subclass{90C26\and 90C32\and 90C55\and 90C90\and 65K05}
\end{abstract}

\section{Introduction}
The problem of compressive sensing is to estimate an unknown sparse signal $\vx \in \mathbb{R}^n$ from $m$ linear measurements $\vb\in \mathbb{R}^m$ given by
$$
\vb=\mA \vx +\vz,
$$
where $\mA \in \mathbb{R}^{m \times n}$ is a sensing matrix, $m$ is much smaller than the signal dimension $n$, and $\vz$ is a stochastic or deterministic unknown error term. Mathematically, this problem can be formulated as
\begin{equation}\label{model:ell0}
\min \{\|\vx\|_0: \|\mA\vx-\vb\|_2 \le \epsilon, \; \vx \in \mathbb{R}^n\},
\end{equation}
where the quantity $\|\vx\|_0$, i.e.,  the $\ell_0$ norm of $\vx$, denotes the number of nonzeros in $\vx$, and $\epsilon$ bounds the amount of noise in the data $\vb$. The $\ell_0$ norm of $\vx$ causes model~\eqref{model:ell0} to be an NP-hard problem, and it is also not a useful measure of the significant number of entries in $\vx$.  Therefore, many alternative models have been proposed to replace the $\ell_0$ norm by sparsity promoting functions and solve the resulting models in computationally tractable algorithms. Examples of the sparsity promoting functions used include the $\ell_1$ norm \cite{Candes-Romberg-Tao:IEEE-TIT:06}, the $\ell_p$ quasi norm ($0<p< 1$) \cite{Chartrand:IEEE-Letter:07,Chen-Shen-Suter:IET:16,Donoho:IEEEIT:06,Prater-Shen-Suter:CSDA:15}, the minimax concave penalty function \cite{Zhang:AS:2010}, the log-sum penalty function \cite{Prater-Shen-Tripp:JSC:2022}, and its generalized form as discussed in \cite{Shen-Suter-Tripp:JOTA:2019}. 

A practical drawback of the $\|\vx\|_0$ norm was highlighted in \cite{Lopes:IEEEIT:2016}, as it is highly sensitive to small entries of $\vx$. In light of this observation, a notion of effective sparsity was introduced to address this issue. Effective sparsity aims to quantify the ``effective number of coordinates of $\vx$'' while remaining robust against small perturbations. For any nonzero vector $\vx\in \R^p$, an induced distribution $\pi(\vx)\in \R^p$ is defined over the index set $\{1, \ldots, p\}$, assigning mass $\pi_j(\vx):=|x_j|/\|\vx\|_1$ to index $j$. It is worth noting that if $\vx$ is sparse, the resulting distribution $\pi(\vx)$ exhibits low entropy. The effective sparsity measure $\tau_q(\vx)$ is then defined as
$$
\tau_q(\vx):=\mathrm{exp}(H_q(\pi(\vx))),
$$
where $H_q(\vx)=\frac{1}{1-q}\mathrm{log}(\sum_{i=1}^p \pi_i(\vx)^q)$ is the R\'enyi entropy of order $q\in (0, \infty) \setminus \{1\}$. For $\vx\ne \mathbf{0}$ and $q\notin \{0, 1, \infty\}$, the effective sparsity can be conveniently expressed as
$$
\tau_q(\vx)=\left(\frac{\|\vx\|_q}{\|\vx\|_1}\right)^{\frac{q}{1-q}}.
$$
As with $H_q$, the cases of $q \in \{0, 1, \infty\}$ are evaluated as limits, which is feasible and informative from the viewpoint of information theory.

This family of entropy-based sparsity measures in terms of R\'enyi entropy gives a conceptual foundation for several norm ratios that have appeared elsewhere in the sparsity literature. The choice of $q$ is relevant to many considerations. Among all choices, the case of $q=2$ 
turns out to be attractive, for example, \cite{lopes2013estimating,Lopes:IEEEIT:2016}
showed that $\tau_2$, referred to as the square of $\ell_1/\ell_2$ norms,  plays an intuitive role in the performance of the Basis Pursuit Denoising algorithm. This sparsity measure was employed to relax the necessary and sufficient conditions for exact $\ell_1$ recovery \cite{Tang-Nehorai:IEEESP:2011}. 

Among various nonconvex regularizers, the ratio of the $\ell_1$ and $\ell_2$ norms, denoted by $\ell_1/\ell_2$, as a sparsity measure was introduced in \cite{Hoyer:Proc-IEEENNSP:2002} and was further investigated in \cite{Hurley-Rickard:IEEEIT:2009}. A model that uses $\ell_1/\ell_2$ to replace the $\ell_0$ norm in model~\eqref{model:ell0} was recently studied in \cite{Rahimi-Wang-Dong-Lou:SIAMSC:2019,Wang-Yan-Rahimi-Lou:IEEESP:2020,Yin2-Esser-Xin:CIS2014}. This unconstrained model, referred to as the \textbf{$\mathbf{\ell_1/\ell_2}$-model}, is written as:
\begin{equation}\label{model:square-root}
\arg\inf\left\{\frac{\|\vx\|_1}{\|\vx\|_2}: \|\mA \vx-\vb\|_2 \le \epsilon, \; \vx\in \mathbb{R}^n\right\}.
\end{equation}
Some theoretical results about the existence of the solutions to model~\eqref{model:square-root} and the Kurdyka-Lojasiewicz property of the cost function of the model are investigated in \cite{Zeng-Yi-Pong:SIAMOP:2021}. 

In our approach, we consider a model that replaces the $\ell_0$ norm in model~\eqref{model:ell0} with $\tau_2$, resulting in the following formulation:
\begin{equation}\label{model:square}
\arg\inf\left\{\tau_2(\vx)=\frac{\|\vx\|_1^2}{\|\vx\|_2^2}:\|\mA \vx-\vb\|_2 \le \epsilon, \; \vx\in \mathbb{R}^n\right\},
\end{equation}
referred to as the \textbf{$\mathbf{\tau_2}$-model}.  The solution sets of the $\tau_2$-model and the $\ell_1/\ell_2$-model are clearly identical in terms of their solutions. Due to the nonconvexity of $\ell_1/\ell_2$ and $\tau_2$, the numerical solutions to both models could be different. Many algorithms have been developed for the $\ell_1/\ell_2$-model \cite{Rahimi-Wang-Dong-Lou:SIAMSC:2019,Wang-Yan-Rahimi-Lou:IEEESP:2020,Zeng-Yi-Pong:SIAMOP:2021}. However, we are not aware of the research work on directly solving the $\tau_2$-model. In the paper, we will focus on developing algorithms for solving the $\tau_2$-model. 

Our approach for the $\tau_2$-model is grounded in Dinkelbach's procedure, an iterative approach used to solve fractional programming problems \cite{Crouzeix-Ferland-Schaible:JOTA-1985}.  Dinkelbach's procedure systematically converts a fractional programming problem into an equivalent non-fractional form. This conversion involves introducing a new variable and reformulating the problem as a minimization problem,  where the objective function is the difference between the numerator and denominator of the fractional programming objective function, scaled by the introduced parameter. The optimal value, expressed as a function of the parameter, defines the Dinkelbach-induced function. The iterative steps of Dinkelbach's method entail solving this transformed problem, adjusting the parameter value in each iteration until convergence. The procedure concludes with the determination of the optimal parameter value corresponding to the solution of the original fractional programming problem. For the $\tau_2$-model, we carefully study the properties of its Dinkelbach-induced function and highlight that the minimization problem associated with the Dinkelbach-induced function is a quadratic programming task. Notably, we believe to be the first to develop  numerical methods specifically tailored for the $\tau_2$-model.

This paper is organized as follows. In Section~\ref{sec:prel}, we review the Dinkelbach procedure for fractional programming. Section~\ref{sec:ourapproach} is dedicated to the theoretical analysis of the $\tau_2$-model with $\epsilon=0$, including the relationship between the $\tau_2$-model and the root of its Dinkelbach-induced function, as well as the solutions existence of the $\tau_2$-model. In Section~\ref{sec:QP}, we highlight that the value of Dinkelbach-induced function is the optimal value of a linear-constrained quadratic programming (LCQP) problem when $\epsilon=0$ and a  quadratic-constrained quadratic programming
(QCQP) problem when $\epsilon>0$.  In Section~\ref{sec:convergence}, we develop an algorithm based on the alternating direction linearized proximal method of multipliers,  capable of handling both noiseless and noisy cases. We also present the convergence analysis of the algorithm. Section~\ref{sec:numerical} focuses on presenting the results of numerical experiments designed to showcase the efficacy of the proposed algorithm. Finally, we summarize our findings and draw conclusions in Section~\ref{sec:conclusion}.

\section{Dinkelbach's Procedure}\label{sec:prel}
In this study, we designate $\mathbb{R}^n$ to denote the Euclidean space of dimension $n$. The bold lowercase letters, such as $\vx$, signify vectors, with the $j$th component represented by the corresponding lowercase letter $x_j$. The notation $\mathrm{supp}(\vx)$ denotes the support of the vector $\vx$, defined as $\mathrm{supp}(\vx)=\{k: x_k \neq 0\}$. The matrices are indicated by bold uppercase letters such as $\mA$ and $\mB$. For a set $S \subset \mathbb{R}^n$, $\mathrm{cl}(S)$ denotes the closure of $S$.

The $\ell_p$ norm of $\vx=[x_1,\ldots, x_n]^\top \in \mathbb{R}^n$ is defined as $\|\vx\|_p=(\sum_{k=1}^n |x_k|^p)^{1/p}$ for $1\le p <\infty$, $\|\vx\|_\infty=\max_{1\le k \le n} |x_k|$, and $\|\vx\|_0$ being the number of nonzero components in $\vx$.

Both the $\tau_2$-model and the $\ell_1/\ell_2$-model are two special examples of traditional fractional programming. A fractional programming problem is defined by
\begin{equation}\label{model-Trad-FP}
\bar{\alpha} = \inf\left\{\frac{f(\vx)}{g(\vx)}: \vx\in S \right\},
\end{equation}
where $S$ is a nonempty subset of $\mathbb{R}^n$, $f$ and $g$ are continuous on an open set $\tilde{S}$ in $\mathbb{R}^n$ including $\mathrm{cl}(S)$, and $g(x)>0$ for all $x \in \tilde{S}$. When we identify $S=\{\vx: \|\mA \vx - \vb\|_2 \le \epsilon\}$, $\tilde{S}=\mathbb{R}^n \setminus \{\mathbf{0}\}$, a fractional programming \eqref{model-Trad-FP} becomes the $\tau_2$-model if $f(\vx)=\|\vx\|_1^2$ and $g(\vx)=\|\vx\|_2^2$, while it becomes the $\ell_1/\ell_2$-model if $f(\vx)=\|\vx\|_1$ and $g(\vx)=\|\vx\|_2$ .

One of the classical methods to handle model~\eqref{model-Trad-FP} is Dinkelbach's procedure which is related to the following auxiliary problem with a parameter $\alpha$
\begin{equation}\label{model-Trad-FP-alpha}
F(\alpha) = \inf\left\{f(\vx)-\alpha g(\vx): \vx\in S \right\}.
\end{equation}
This function $F$ is known as the Dinkelbach-induced function of model \eqref{model-Trad-FP}, or simply the Dinkelbach-induced function when the model is clear from the context.  In \cite{Crouzeix-Ferland-Schaible:JOTA-1985} it has been shown that if model \eqref{model-Trad-FP} has an optimal solution at $\bar{\vx} \in S$, then this solution is also optimal for \eqref{model-Trad-FP-alpha}, and the optimal objective value of the latter is zero. Conversely, if \eqref{model-Trad-FP-alpha} has $\bar{\vx} \in S$ as an optimal solution and its optimal objective value is zero, then $\bar{\vx}$ is also an optimal solution for \eqref{model-Trad-FP}. The algorithm arising from Dinkelbach's procedure is described as follows:
\begin{enumerate}
    \item Start with some $\vx^{(0)} \in S$.
    \item Set $\alpha^{(1)} = f(\vx^{(0)})/g(\vx^{(0)})$.
    \item Update $\alpha^{(k+1)} = f(\vx^{(k)})/g(\vx^{(k)})$, where  $\vx^{(k)}$ is assumed to be an optimal solution of the corresponding problem. 
\end{enumerate}
This procedure can be viewed as the Newton method for finding a root of the equation 
$F(\alpha)=0$, as discussed in \cite{Crouzeix-Ferland:MP-1991,Ibaraki:MP-1983}.

The work in \cite{Wang-Yan-Rahimi-Lou:IEEESP:2020} for the $\ell_1/\ell_2$-model with $\epsilon=0$ essentially follows Dinkelbach's procedure. In this situation, the objective function in the $k$th iteration is $\|\vx\|_1-\alpha^{(k)} \|\vx\|_2$ which is the difference of two convex functions and is provably unbounded from below. To overcome this difficulty and accelerate the optimization problem at the $k$th iteration, the term $\alpha^{(k)} \|x\|_2$ is replaced by its linearization at the previous iterate $\vx^{(k-1)}$ with an additional regularization term in \cite{Wang-Yan-Rahimi-Lou:IEEESP:2020}.

The objective of this paper is to scrutinize the properties and devise algorithms for the $\tau_2$-model within the framework of Dinkelbach's procedure.

\section{Theoretical Analysis for the $\tau_2$-Model with $\epsilon=0$}\label{sec:ourapproach}
In this section, we will present the special properties of the $\tau_2$-model with $\epsilon=0$ and the associated optimization problem arising from Dinkelbach's procedure. In this situation,  we rewrite the $\tau_2$-model~\eqref{model:square} as
\begin{equation*}\label{model:square-new}
\bar{\alpha}=\inf\left\{\frac{\|\vx\|_1^2}{\|\vx\|_2^2}:\mA \vx= \vb, \; \vx\in \mathbb{R}^n\right\}. \tag{$P$}
\end{equation*}
We then define the Dinkelbach-induced function of \eqref{model:square-new} as:
\begin{equation}\label{model-Trad-FP-square}
F(\alpha) = \inf\left\{\|\vx\|_1^2-\alpha \|\vx\|_2^2: \mA \vx= \vb, \; \vx\in \mathbb{R}^n \right\}. \tag{$Q$}
\end{equation}
In the following discussion, we always assume that $\mA$ is the full row rank, that the vector $\vb$ is nonzero and is in the range space of $\mA$.

From \cite[Proposition 2.1]{Crouzeix-Ferland-Schaible:JOTA-1985}, we have the following results regarding $\bar{\alpha}$ and $F(\alpha)$ in \eqref{model:square-new} and \eqref{model-Trad-FP-square}:
\begin{itemize}
\item[(i)] $F(\alpha)<+\infty$; $F$ is nonincreasing and upper semicontinuous;
\item[(ii)] $F(\alpha)<0$ if and only if $\alpha>\bar{\alpha}$; hence $F(\bar{\alpha}) \ge 0$;
\item[(iii)] If \eqref{model:square-new} has an optimal solution, then $F(\bar{\alpha}) = 0$;
\item[(iv)] If $F(\bar{\alpha}) = 0$, then  \eqref{model:square-new} and  \eqref{model-Trad-FP-square} have the same set of optimal solutions (which may be empty).
\end{itemize}

Some specific properties of Problems~\eqref{model:square-new} and \eqref{model-Trad-FP-square} will be elucidated in the following subsections. 

 \subsection{Properties for Problems~\eqref{model:square-new} and \eqref{model-Trad-FP-square}}

We begin with defining a parameter
\begin{equation}\label{def:alpha-star}
\alpha^* = \min_{\|\vv\|_2=1, \; \mA\vv=\bm{0}}\|\vv\|_1^2.
\end{equation}
which plays an important role in the analysis of the behavior of $F(\alpha)$ in \eqref{model-Trad-FP-square} and $\bar{\alpha}$ in \eqref{model:square-new}. Since the set $\{\vv: \|\vv\|_2=1, \; \mA\vv=\bm{0}\}$ is compact and the $\ell_1$ norm is continuous, the optimal value $\alpha^*$ is achievable at some unit vector in $\mathrm{Ker} (\mA)$.

For any $\vx\in \R^n$, the inequality $\|\vx\|_2^2 \le \|\vx\|_1^2 \le n \|\vx\|_2^2$ holds. Consequently, $F(\alpha) \ge 0$ for all $\alpha \le 1$ and $F(\alpha) \le 0$ for all $\alpha \ge n$. As discussed above, identifying a root of the equation $F(\alpha)=0$ is crucial in Dinkelbach’s procedure. Therefore, we focus our analysis on the behavior of $F$ within the interval $[1, n]$ and present the following result.

\begin{proposition}\label{prop:F(alpha)}
Let $\alpha^*$ be a number given in \eqref{def:alpha-star} and let $F$ be the Dinkelbach-induced function of \eqref{model:square-new} defined in \eqref{model-Trad-FP-square}. The following statements hold.
\begin{itemize}
\item[(i)] If $\alpha^*=1$ or $\alpha^*=n$, then $F(\alpha^*)$ is a real number.
\item[(ii)] If $1<\alpha^*<n$,  then the function $F$ is finite and strictly decreasing on $[1, \alpha^*)$, and takes the value of $-\infty$ on $(\alpha^*, n]$.
\end{itemize}

\end{proposition}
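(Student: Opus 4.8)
The plan is to carry out all the analysis on the feasible affine subspace $V:=\{\vx:\mA\vx=\vb\}=\vx_0+\mathrm{Ker}(\mA)$, where $\vx_0$ is any fixed solution of $\mA\vx=\vb$ (which exists because $\vb$ lies in the range of $\mA$), and to abbreviate $h_\alpha(\vx):=\|\vx\|_1^2-\alpha\|\vx\|_2^2$, so that $F(\alpha)=\inf_{\vx\in V}h_\alpha(\vx)$. The inequalities $\|\vx\|_2^2\le\|\vx\|_1^2\le n\|\vx\|_2^2$ give $\alpha^*\in[1,n]$ at once, and $F(\alpha)\le h_\alpha(\vx_0)<+\infty$ for every $\alpha$. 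The whole proposition then reduces to two ``workhorse'' facts about the recession behaviour of $h_\alpha$ on $V$, together with a separate treatment of the two boundary cases in (i).

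Workhorse facts. (A) If $\alpha<\alpha^*$, then $h_\alpha$ is coercive on $V$, hence $F(\alpha)$ is finite and attained. I would argue: if $\vx_k\in V$ with $\|\vx_k\|\to\infty$, write $\vx_k=\vx_0+r_k\vu_k$ with $r_k\to\infty$, $\vu_k\in\mathrm{Ker}(\mA)$, $\|\vu_k\|_2=1$, and pass, by compactness of the unit sphere of $\mathrm{Ker}(\mA)$, to a subsequence $\vu_k\to\vu$; then $\vu$ is a unit vector of $\mathrm{Ker}(\mA)$, so $\|\vu\|_1^2\ge\alpha^*$. From $\|\vx_k\|_1\ge r_k\|\vu_k\|_1-\|\vx_0\|_1$ and $\|\vx_k\|_2^2=r_k^2+O(r_k)$ one gets $h_\alpha(\vx_k)\ge(\|\vu_k\|_1^2-\alpha)r_k^2-O(r_k)$, and since $\|\vu_k\|_1^2\to\|\vu\|_1^2\ge\alpha^*>\alpha$ this forces $h_\alpha(\vx_k)\to+\infty$; coercivity follows, and continuity of $h_\alpha$ on the closed set $V$ gives a finite minimizer. (B) If $\alpha>\alpha^*$, then $F(\alpha)=-\infty$. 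Let $\vv^*$ be a unit vector of $\mathrm{Ker}(\mA)$ attaining $\|\vv^*\|_1^2=\alpha^*$; then $\vx_0+t\vv^*\in V$ for all $t$, and for $t$ large each coordinate $x_{0,j}+tv^*_j$ with $v^*_j\neq0$ has settled into the sign of $v^*_j$, so $\|\vx_0+t\vv^*\|_1=t\|\vv^*\|_1+c$ for a constant $c$; expanding, $h_\alpha(\vx_0+t\vv^*)=(\|\vv^*\|_1^2-\alpha)t^2+O(t)=(\alpha^*-\alpha)t^2+O(t)\to-\infty$.

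Proof of (ii). If $1<\alpha^*<n$, then every $\alpha\in(\alpha^*,n]$ satisfies $\alpha>\alpha^*$, so $F\equiv-\infty$ there by (B); and every $\alpha\in[1,\alpha^*)$ satisfies $\alpha<\alpha^*$, so $F(\alpha)$ is finite by (A). For strict decrease I would use the uniform lower bound $\|\vx\|_2\ge\|\vb\|_2/\|\mA\|_{\mathrm{op}}=:\sqrt{\delta}>0$ valid for all $\vx\in V$ (since $\mA\vx=\vb\neq\vzero$): for $1\le\alpha_1<\alpha_2<\alpha^*$ and any $\vx\in V$, $h_{\alpha_2}(\vx)=h_{\alpha_1}(\vx)-(\alpha_2-\alpha_1)\|\vx\|_2^2\le h_{\alpha_1}(\vx)-(\alpha_2-\alpha_1)\delta$, and taking infima over $\vx\in V$ gives $F(\alpha_2)\le F(\alpha_1)-(\alpha_2-\alpha_1)\delta<F(\alpha_1)$.

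Proof of (i). The case $\alpha^*=1$ is immediate: $h_1\ge0$ on all of $\R^n$, so $0\le F(1)\le h_1(\vx_0)<+\infty$. The case $\alpha^*=n$ is the delicate one. Here every nonzero $\vv\in\mathrm{Ker}(\mA)$ satisfies $\|\vv\|_1^2=n\|\vv\|_2^2$ ($\ge n$ by definition of $\alpha^*$, $\le n$ by Cauchy--Schwarz), which is the Cauchy--Schwarz equality case and forces all coordinates of $\vv$ to have equal modulus. A short linear-algebra step — if $\vv=a\vs$ and $\vw=b\vt$ with $\vs,\vt\in\{\pm1\}^n$ were linearly independent kernel vectors, then $\vv+\vw$ having equal-modulus coordinates would force $|a+b|=|a-b|$, i.e.\ $ab=0$ — shows $\dim\mathrm{Ker}(\mA)\le1$, hence (it cannot be $\{\vzero\}$, or $\alpha^*$ would be undefined) exactly $1$, say $\mathrm{Ker}(\mA)=\mathrm{span}\{\vw\}$ with $\vw=c\vs$, $\vs\in\{\pm1\}^n$, $c>0$. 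On the feasible line $\vx_0+t\vw$, for $|t|$ large one has $\|\vx_0+t\vw\|_1=|\langle\vs,\vx_0\rangle+tcn|$, and a direct expansion collapses to $h_n(\vx_0+t\vw)=\langle\vs,\vx_0\rangle^2-n\|\vx_0\|_2^2$, a constant, for all $|t|$ sufficiently large; since $h_n$ is continuous it is bounded on the remaining compact $t$-interval, so $F(n)=\inf_t h_n(\vx_0+t\vw)$ is finite. I expect this $\alpha^*=n$ bookkeeping (one-dimensionality of the kernel together with the ``eventually constant'' identity for $h_n$) to be the main obstacle; (A), (B) and (ii) are the standard recession-cone analysis.
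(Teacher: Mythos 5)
Your proof is correct, and its skeleton---parametrizing the feasible set as $\vx_0+\mathrm{Ker}(\mA)$ and comparing the quadratic recession behaviour of $\|\cdot\|_1^2-\alpha\|\cdot\|_2^2$ along kernel directions against $\alpha^*$---matches the paper's. Three of your sub-arguments genuinely diverge, and each is arguably tighter. First, for $\alpha<\alpha^*$ the paper analyzes $\inf_t K(t)$ one direction $\vv$ at a time and then passes to the infimum over all directions without addressing uniformity; your coercivity argument (A), which extracts a convergent subsequence of directions from the compact unit sphere of $\mathrm{Ker}(\mA)$, closes that gap and additionally delivers attainment of $F(\alpha)$. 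Second, for strict monotonicity the paper invokes minimizers $\vp,\vq$ realizing $F(\alpha_1)$ and $F(\alpha_2)$ (attainment it does not separately justify there), whereas your uniform bound $\|\vx\|_2\ge\|\vb\|_2/\|\mA\|_{\mathrm{op}}>0$ over the feasible set yields $F(\alpha_2)\le F(\alpha_1)-(\alpha_2-\alpha_1)\delta$ directly at the level of infima, with no attainment needed. Third, for $\alpha^*=n$ the paper shows the bounded term $q(t)$ vanishes by exploiting the orthogonality $\vx_0\perp\mathrm{Ker}(\mA)$; you instead observe that the Cauchy--Schwarz equality case forces every kernel vector to have equal-modulus coordinates, hence $\dim\mathrm{Ker}(\mA)=1$, after which the computation collapses to a single line and requires no special choice of $\vx_0$. (Your $\alpha^*=1$ case, $h_1\ge 0$ globally, is exactly the remark the paper makes right after the proposition.) The only thing you give up is the paper's finer trichotomy at the borderline $\alpha=\alpha^*$ via the sign of $q(t)$, which the paper reuses in its two subsequent examples but which the proposition itself does not assert.
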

\begin{proof}\ \
Let $\vx_0$ be a solution of the system $\mA \vx=\vb$ such that $\langle \vx_0, \vv \rangle =0$ for all $\vv \in \mathrm{Ker}(\mA)$. Then, the solution set of this system is
$$
\{\vx: \mA \vx=\vb\}=\vx_0+\mathrm{Ker}(\mA)=\{\vx_0+t \vv: \|\vv\|_2=1, \mA \vv =\bm{0}, t \in \mathbb{R} \}
$$
with $\vx_0 \perp \mathrm{Ker}(\mA)$. With these preparations, the function $F$ in \eqref{model-Trad-FP-square} can be written as
$$
F(\alpha)=\inf_{\vv\in \mathbb{R}^n, t \in \mathbb{R}}\{\|\vx_0+t\vv\|_1^2-\alpha\|\vx_0+t \vv\|_2^2:\;\; \mA \vv= \bm{0}, \|\vv\|_2=1\}.
$$
Hence, $F(\alpha)$ being finite or negative infinity depends on the behavior of the objective function of the above optimization problem for large value of $t$. To this end, for a given unit vector $\vv\in \mathrm{Ker}(\mA)$, $\vx_0$ with $\mA \vx_0=\vb$, and $\alpha \in [1,n]$, define $K: \mathbb{R} \rightarrow \mathbb{R}$ as
$$
K(t)=\|\vx_0+t\vv\|_1^2-\alpha\|\vx_0+t \vv\|_2^2.
$$
Further, for the given $\vx_0$  and  the unit vector $\vv\in \mathrm{Ker}(\mA)$, define a subset of $\mathbb{R}$ as
$$
\mathcal{S}:=(-\infty, -\sigma_{\vx_0,\vv}) \cup (\sigma_{\vx_0,\vv},\infty)
$$
with $\sigma_{\vx_0,\vv}:=\min\{|s|: |s v_i| \ge |x_{0,i}|, i \in \mathrm{supp}(\vv)\}$. 

By the continuity of $K$ on the closed interval $\mathcal{S}^c =[-\sigma_{\vx_0,\vv},\sigma_{\vx_0,\vv}]$, the function $K$ can achieve its global minimum on this interval. Away from the interval, i.e., for $t \in \mathcal{S}$, we have
\begin{eqnarray*}
    \|\vx_0+t\vv\|_1&=& \underbrace{\sum_{i \in \mathrm{supp}(\vv)}tv_i \sgn(tv_i)}_{=|t|\|\vv\|_1} + \underbrace{\sum_{i \in \mathrm{supp}(\vv)}x_{0,i}\sgn(tv_i) + \sum_{i \notin \mathrm{supp}(\vv)} |x_{0,i}|}_{q(t):=}\\
    &=&|t|\|\vv\|_1 + q(t)
\end{eqnarray*}
and
$$
\|\vx_0+t\vv\|_2^2 = \|\vx_0\|_2^2 + t^2,
$$
which leads to
$$
K(t)=(\|\vv\|_1^2-\alpha)t^2 + 2 q(t)\|\vv\|_1 |t| +(q(t))^2-\alpha\|\vx_0\|_2^2
$$
for $t \in \mathcal{S}$. Noting that $|q(t)| \le \|\vx_0\|_1$ for all $t$,  we conclude from the above discussions that
$$
\inf_{t \in \mathbb{R}} K(t)=
\left\{
  \begin{array}{ll}
    \mbox{a real number}, & \hbox{if $\|\vv\|_1^2 >\alpha$;} \\
    \mbox{a real number}, & \hbox{if $\|\vv\|_1^2 =\alpha$ and $q(t) \ge 0$;} \\    
    -\infty, & \hbox{otherwise.}
  \end{array}
\right.
$$
Thus,
$$
F(\alpha)=
\left\{
  \begin{array}{ll}
    \mbox{a real number}, & \hbox{if $\alpha^* >\alpha$;} \\
    \mbox{indefinite}, & \hbox{if $\alpha^* =\alpha$;}\\
    -\infty, & \hbox{if $\alpha^* <\alpha$.}
    
  \end{array}
\right.
$$

Item (i): We notice that $1 \le \alpha^* \le n$. Let $\vv \in \mathrm{Ker}(\mA)$ with $\|\vv\|_2=1$ such that $\alpha^*=\|\vv\|_1^2$. If $\alpha^*=1$, then the vector $\vv$ must have only nonzero element with the value of $\pm 1$. Without loss of generality, we assume that $v_1=\pm 1$. It leads to $x_{0,1}=0$ because of $\langle \vv, \vx_0 \rangle=v_1x_{0,1}=0$. As a result, $q(t)=\sum_{i=2}^n |x_{0,i}| \ge 0$, hence, $F(1)$ is a real number.

On the other hand, if $\alpha^*=n$, then every component of the vector $\vv$ must be $\pm \frac{1}{\sqrt{n}}$. Since $\sgn(v_i)=\sqrt{n} v_i$, we have
$$
q(t)=\sum_{i=1}^n x_{0,i}\sgn(tv_i)=\sgn(t)\sum_{i=1}^n x_{0,i}\sgn(v_i)=\sgn(t)\sqrt{n}\sum_{i=1}^n x_{0,i}v_i=0.
$$
Then, $F(n)$ is a real number if $\alpha^*=n$.

Item (ii): If $1< \alpha^* <n$, we have that $F([1, \alpha^*)) \subset \mathbb{R}$ and $F((\alpha^*, n]) = \{-\infty\}$. Next, we show the strictly decreasing on $[1, \alpha^*)$. Assume that both $\alpha_1$ and $\alpha_2$ are in $[1, \alpha^*)$ with $\alpha_1 < \alpha_2$. Then, there exist $\vp$ and $\vq$ with $\mA \vp =\mA \vq=\vb$ such that $F(\alpha_1)=\|\vp\|_1^2-\alpha_1\|\vp\|_2^2$ and $F(\alpha_2)=\|\vq\|_1^2-\alpha_2\|\vq\|_2^2$. Hence
\begin{eqnarray*}
F(\alpha_1)&=&\|\vp\|_1^2-\alpha_1\|\vp\|_2^2 \\
&=& \|\vp\|_1^2-\alpha_2\|\vp\|_2^2 +(\alpha_2-\alpha_1)\|\vp\|_2^2 \\
&\ge&\|\vq\|_1^2-\alpha_2\|\vq\|_2^2 +(\alpha_2-\alpha_1)\|\vp\|_2^2 \\
&>&F(\alpha_2).
\end{eqnarray*}
That is, $F$ is strictly decreasing on $[1, \alpha^*)$.
\end{proof}

Two comments regarding Proposition~\ref{prop:F(alpha)} are warranted. The initial observation notes that the finiteness of $F(1)$ when $\alpha^*=1$ can be inferred from the fact that $\|\vx\|_1^2-\|\vx\|_2^2 \ge 0$ holds for all $\mA \vx = \vb$. The second comment pertains to the indefiniteness of $F(\alpha^*)$, specifically, whether $F(\alpha^*)$ belongs to the set of real numbers $\mathbb{R}$ or if $F(\alpha^*)$ equals negative infinity. To delve into this matter, two examples will be presented.

\begin{example}
The first example is to show the case $F(\alpha^*)=-\infty$. Define
\begin{equation*}
    \mA:=\begin{bmatrix}
1 & -1 & 0 & 0 & 0 & 0 \\
1 & 0 & -1 & 0 & 0 & 0 \\
0 & 1 & 1 & 1 & 0 & 0 \\
2 & 2 & 0 & 0 & 1 & 0 \\
1 & 1 & 0 & 0 & 0 & -1
\end{bmatrix} \quad \mbox{and} \quad
\vb=\begin{bmatrix}0 \\ 0 \\ 20 \\ 40 \\ 18\end{bmatrix}.
\end{equation*}
This matrix $\mA$ and vector $\vb$ are borrowed from \cite{Rahimi-Wang-Dong-Lou:SIAMSC:2019}. Then,
$$
\{\vx: \mA \vx=\vb\} = \vx_0 + \mathrm{Ker}(\mA) = \vx_0 + \mathrm{Span}(\vv),
$$
where
$$
\vx_0=\begin{bmatrix}0\\ 0 \\ 0 \\ 20 \\ 40 \\ -18\end{bmatrix}
+\frac{236}{27}\begin{bmatrix}1\\ 1 \\ 1 \\ -2 \\ -4\\ 2\end{bmatrix}, \quad
\vv=\frac{\sqrt{3}}{9}\begin{bmatrix}1\\ 1 \\ 1 \\ -2 \\ -4\\ 2\end{bmatrix}.
$$
We can verify that $\mA \vx_0=\vb$ and $\langle \vx_0, \vv\rangle=0$.
For this example, we have $\alpha^* = \|\vv\|_1^2 = \frac{121}{27}$.
Since $\mathrm{supp}(\vv)=\{1,2,3,4,5,6\}$, we have $q(t) = \frac{490}{27}\sgn(t)$. Here, the function $q(t)$ is defined in the proof of Proposition~\ref{prop:F(alpha)}. This implies that $F(\alpha^*)=-\infty$. 

For case $F(\alpha^*)=-\infty$, when \eqref{model:square-new} has an optimal solution $\bar{\alpha}$, we should have $\bar{\alpha}< \alpha^*$. Here we can further compute the value of $\bar{\alpha}$. Indeed, we have
$$
\bar{\alpha} = \inf_{t \in \mathbb{R}}\frac{\|\vx_0+t\vv\|_1^2}{\|\vx_0+t\vv\|_2^2} = \frac{\|\vx_0-\frac{236}{3\sqrt{3}}\vv\|_1^2}{\|\vx_0-\frac{236}{3\sqrt{3}}\vv\|_2^2}=\frac{1521}{581},
$$
which is strictly smaller than $\alpha^*$.
\end{example}

The matrix $\mA$ and the vector $\vb$ in the subsequent example are derived from those in the preceding example by omitting their second rows.

\begin{example}
The second example we consider here has
\begin{equation*}
    \mA:=\begin{bmatrix}
1 & -1 & 0 & 0 & 0 & 0 \\
0 & 1 & 1 & 1 & 0 & 0 \\
2 & 2 & 0 & 0 & 1 & 0 \\
1 & 1 & 0 & 0 & 0 & -1
\end{bmatrix} \quad \mbox{and} \quad
\vb=\begin{bmatrix}0  \\ 20 \\ 40 \\ 18\end{bmatrix}.
\end{equation*}
Then,
$$
\{\vx: \mA \vx=\vb\} = \vx_0 + \mathrm{Ker}(\mA) = \vx_0 + \mathrm{Span}(\vv_1,\vv_2),
$$
where
$$
\vx_0=\begin{bmatrix}9\\ 9 \\ 11 \\ 0 \\ 4 \\ 0\end{bmatrix}-
\frac{1}{45}\begin{bmatrix}-7 \\ -7 \\ 251 \\ -244 \\ 28 \\ -14\end{bmatrix}, \quad
\vv_1=\frac{1}{\sqrt{2}}\begin{bmatrix}0\\ 0 \\ -1 \\ 1 \\ 0 \\ 0\end{bmatrix}, \quad
\vv_2=\frac{1}{\sqrt{90}}\begin{bmatrix}2\\ 2 \\ -1 \\ -1 \\ -8 \\ 4\end{bmatrix}.
$$
We can check that $\{\vv_1,\vv_2\}$ is the orthonormal basis of $\mathrm{Ker}(\mA)$ and $\vx_0$ is orthogonal to $\mathrm{Ker}(\mA)$.

Every unit vector in $\mathrm{Ker}(\mA)$ has a form of $(\cos \theta) \vv_1+(\sin \theta) \vv_2$ with some $\theta\in [0, 2\pi]$. Hence
\begin{eqnarray*}
\alpha^*&=&\min_{\|\vv\|_2=1, \; \mA\vv=\bm{0}}\|\vv\|_1^2 \\
&=&\min_{0 \le \theta \le 2\pi} \|(\cos \theta) \vv_1+(\sin \theta) \vv_2\|_1^2 \\
&=&\min_{0 \le \theta \le 2\pi} \left(\frac{16}{\sqrt{90}}|\sin \theta|+\left|\frac{\cos \theta}{\sqrt{2}}+\frac{\sin\theta}{\sqrt{90}}\right|+\left|\frac{\cos \theta}{\sqrt{2}}-\frac{\sin\theta}{\sqrt{90}}\right|\right)^2 \\
&=&2.
\end{eqnarray*}
In this situation, we have $\mathrm{supp}(\vv)=\{3,4\}$ and
$$
q(t)= \left(11-\frac{251}{45}\right)\sgn(-t)+\frac{244}{45}\sgn(t)+2\left(9+\frac{7}{45}\right)+\left(4-\frac{28}{45}\right)+14 =35\frac{31}{45}>0.
$$
Once again, $q(t)$ is defined within the context of proof of Proposition~\ref{prop:F(alpha)}. Hence, $F(\alpha^*)$ is a real number.  

Now, for case $F(\alpha^*)\in \R$, when \eqref{model:square-new} has an optimal solution $\bar{\alpha}$, we should have $\bar{\alpha}\le \alpha^*$. Next, we can compute the value of $\bar{\alpha}$.
$$
\bar{\alpha} = \inf_{(s,t) \in \mathbb{R}^2}\frac{\|\vx_0+s\vv_1+t\vv_2\|_1^2}{\|\vx_0+s\vv_1+t\vv_2\|_2^2} = 2.
$$
The value of $\bar{\alpha}$ is obtained by the unbounded minimizing sequence $\{\vx_0+k\vv_1\}_{k=1}^\infty$. In this case, $\bar{\alpha}=\alpha^*$.
\end{example}

With the help of the two detailed examples, we have addressed the issue of the indefiniteness of $F(\alpha)$ for $\alpha\ne 1$ and $n$. This has shed light on whether $F(\alpha^*)$ is a real number or $-\infty$ for different scenarios. Furthermore, the examples provide a practical illustration of the following proposition, which establishes a connection between the optimal value of $\bar{\alpha}$ in model~\eqref{model:square-new} and the parameter $\alpha^*$ in \eqref{def:alpha-star}.

\begin{proposition}\label{prop:alpha-alpha}
Let $\bar{\alpha}$ be given in model~\eqref{model:square-new} and let $\alpha^*$ be given in \eqref{def:alpha-star}. Assume that the set $\{\vx: \mA \vx =\vb\}$ is non-empty and $\mathrm{Ker}(\mA) \neq \{\mathbf{0}\}$. Then the following statements hold.
\begin{itemize}
\item[(i)] $\bar{\alpha} \le \alpha^*$
\item[(ii)] $\bar{\alpha} = \alpha^*$ if and only if there exists an unbounded minimizing sequence of \eqref{model:square-new}.
\end{itemize}
\end{proposition}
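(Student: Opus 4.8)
My plan is to base everything on the affine decomposition $\{\vx:\mA\vx=\vb\}=\vx_0+\mathrm{Ker}(\mA)$ with $\vx_0\perp\mathrm{Ker}(\mA)$ used in the proof of Proposition~\ref{prop:F(alpha)}, together with the elementary estimates established there: for a unit vector $\vv\in\mathrm{Ker}(\mA)$ one has $\|\vx_0+t\vv\|_2^2=\|\vx_0\|_2^2+t^2$, and for $|t|$ large $\|\vx_0+t\vv\|_1=|t|\,\|\vv\|_1+q(t)$ with $|q(t)|\le\|\vx_0\|_1$. Both hypotheses of the proposition are needed at the outset: nonemptiness of $\{\vx:\mA\vx=\vb\}$ gives $\vx_0$ and makes $\bar\alpha\in[1,n]$ well defined, while $\mathrm{Ker}(\mA)\ne\{\mathbf 0\}$ makes $\alpha^*$ well defined and guarantees a unit vector in $\mathrm{Ker}(\mA)$.

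\emph{Part (i).} First I would pick a unit vector $\vv^*\in\mathrm{Ker}(\mA)$ attaining $\alpha^*=\|\vv^*\|_1^2$ (it exists by the compactness remark preceding Proposition~\ref{prop:F(alpha)}) and evaluate the objective of \eqref{model:square-new} along the feasible ray $\vx^{(k)}:=\vx_0+k\vv^*$. Using the two estimates above, $\|\vx^{(k)}\|_1^2/\|\vx^{(k)}\|_2^2=(k\|\vv^*\|_1+q(k))^2/(\|\vx_0\|_2^2+k^2)\to\|\vv^*\|_1^2=\alpha^*$, hence $\bar\alpha\le\alpha^*$. (Alternatively, in the case $1<\alpha^*<n$ this also follows from Proposition~\ref{prop:F(alpha)}(ii) combined with item (ii) of the Crouzeix--Ferland--Schaible list, since $F(\alpha)=-\infty<0$ for $\alpha>\alpha^*$ forces $\bar\alpha<\alpha$ there; but the direct ray argument avoids the edge cases $\alpha^*\in\{1,n\}$.)

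\emph{Part (ii).} The direction ``$\Rightarrow$'' is immediate from Part (i): if $\bar\alpha=\alpha^*$, then the sequence $\{\vx_0+k\vv^*\}_{k}$ constructed above is a minimizing sequence of \eqref{model:square-new}, and it is unbounded since $\|\vx_0+k\vv^*\|_2\to\infty$. For ``$\Leftarrow$'', suppose $\{\vx^{(k)}\}$ is an unbounded minimizing sequence; passing to a subsequence we may assume $\|\vx^{(k)}\|_2\to\infty$. Write $\vx^{(k)}=\vx_0+\vw^{(k)}$ with $\vw^{(k)}\in\mathrm{Ker}(\mA)$; orthogonality gives $\|\vw^{(k)}\|_2^2=\|\vx^{(k)}\|_2^2-\|\vx_0\|_2^2\to\infty$. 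Set $\vu^{(k)}:=\vw^{(k)}/\|\vw^{(k)}\|_2$, a unit vector in $\mathrm{Ker}(\mA)$; by compactness a further subsequence converges to a unit vector $\vu\in\mathrm{Ker}(\mA)$. Dividing numerator and denominator of $\|\vx^{(k)}\|_1/\|\vx^{(k)}\|_2=\|\vx_0/\|\vw^{(k)}\|_2+\vu^{(k)}\|_1/\|\vx_0/\|\vw^{(k)}\|_2+\vu^{(k)}\|_2$ by $1$ after factoring out $\|\vw^{(k)}\|_2$, and using continuity of the norms, we get $\|\vx^{(k)}\|_1/\|\vx^{(k)}\|_2\to\|\vu\|_1/\|\vu\|_2=\|\vu\|_1$ along this subsequence. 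Since $\{\vx^{(k)}\}$ is minimizing, $\bar\alpha=\|\vu\|_1^2\ge\alpha^*$, and with Part (i) this yields $\bar\alpha=\alpha^*$.

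\emph{Main obstacle.} The norm asymptotics are already available from Proposition~\ref{prop:F(alpha)}, so the only step needing genuine care is the ``$\Leftarrow$'' direction: one must normalize the kernel component of the unbounded sequence, extract a convergent subsequence of \emph{directions}, and verify that the limiting direction is a bona fide unit vector of $\mathrm{Ker}(\mA)$ so that its squared $\ell_1$ norm is $\ge\alpha^*$. I expect this (mild) compactness/normalization argument to be the crux; everything else reduces to the estimates established earlier.
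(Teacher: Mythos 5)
Your proposal is correct and follows essentially the same route as the paper: part (i) is the same limit-along-a-feasible-ray argument (the paper proves $\bar{\alpha}\le\|\vd\|_1^2/\|\vd\|_2^2$ for every $\vd\in\mathrm{Ker}(\mA)$ and then infimizes, while you go directly along the minimizing direction $\vv^*$), and part (ii) uses the same explicit unbounded sequence $\{\vx_0+k\vv^*\}$ for the forward direction and the same normalize-and-extract-a-limiting-kernel-direction argument for the converse (the paper normalizes the whole iterate $\vx_k/\|\vx_k\|_2$ rather than its kernel component, which is an immaterial difference). No gaps.
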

\begin{proof}
Item (i). For any $\vx \in \{\vx: \mA \vx =\vb\}$ and nonzero vector $\vd \in \mathrm{Ker}(\mA)$, since $\mA(\vx+t\vd)=\vb$, we have
$$
\bar{\alpha} \le \frac{\|\vx+t\vd\|_1^2}{\|\vx+t\vd\|_2^2}=\frac{\|\vx/t+\vd\|_1^2}{\|\vx/t+\vd\|_2^2}
$$
for all $t\in \mathbb{R}$. Letting $t$ approach to infinity for the above inequality leads to
$\bar{\alpha} \le \frac{\|\vd\|_1^2}{\|\vd\|_2^2}$ for all $\vd \in \mathrm{Ker}(\mA)$. Hence, item (i) holds.

Item (ii). Suppose $\bar{\alpha} = \alpha^*$. There exists a unit vector $\vv \in \mathrm{Ker}(\mA)$ such that $\alpha^* = \|\vv\|_1^2$. Let $\vx_0$ be an arbitrary vector such that $\mA \vx_0=\vb$. We have
$$
\lim_{k \rightarrow \infty}\frac{\|\vx_0+k\vv\|_1^2}{\|\vx_0+k\vv\|_2^2} = \|\vv\|_1^2 = \alpha^*=\bar{\alpha}.
$$
Then, $\{\vx_0+k\vv\}_{k=1}^\infty$ is unbounded and is minimizing sequence of \eqref{model:square-new}.

On the contrary, suppose that $\{\vx_k\}_{k=1}^\infty$ is an unbounded minimizing sequence of \eqref{model:square-new}. Without loss of generality, we have $\lim_{k \rightarrow \infty}\|\vx_k\|_2=\infty$ and $\lim_{k \rightarrow \infty}\frac{\vx_k}{\|\vx_k\|_2}=\vx_*$. Then, we have
$\mA \vx_* = \lim_{k \rightarrow \infty}\frac{\mA\vx_k}{\|\vx_k\|_2}=\mathbf{0}$.
Hence, $\alpha^* \le \|\vx_*\|_1^2 =\frac{\|\vx_*\|_1^2}{\|\vx_*\|_2^2} =\lim_{k\rightarrow\infty}\frac{\|\vx_k\|_1^2}{\|\vx_k\|_2^2}=\bar{\alpha}$.
By item (i), we conclude $\bar{\alpha} = \alpha^*$.
\end{proof}

Prior to examining the behavior of function $F$ in \eqref{model-Trad-FP-square} based on $\alpha^*$, and understanding the relationship between $\bar{\alpha}$ in \eqref{model:square-new} and $\alpha^*$, one might initially consider implementing a bisection-search algorithm on the interval $[1, n]$ to locate the root of $F(\alpha)$, drawing upon from \cite[Proposition 2.1]{Crouzeix-Ferland-Schaible:JOTA-1985}. However, one can see for values of $\alpha^*$ in the range $1 < \alpha^* < n$, the function $F$ lacks the continuity necessary for effectively deploying bisection-search. Beyond this, the inherent nonconvexity of \eqref{model-Trad-FP-square} minimization potentially renders a bisection-search-based algorithm inefficient and, more critically, prone to converging to suboptimal solutions. These considerations highlight the importance of careful algorithmic design when solving the $\tau_2$-model using Dinkelbach’s procedure, as guided by the aforementioned Propositions~\ref{prop:F(alpha)} and \ref{prop:alpha-alpha}.

\subsection{Solutions Existence for Problem~\eqref{model:square-new}}  
In this subsection, we will discuss the existence of global optimal solutions of Problem~\eqref{model:square-new} based on the concept of spherical section property.

\begin{definition} (Spherical section property). Let $m, n$ be two positive integers such that $m<n$. Let $V$ be an $(n-m)$-dimensional subspace of $\mathbb{R}^n$ and $s$ be a positive integer. We say that $V$ has the $s$-spherical section property if
\begin{equation*}
    \inf_{\vv\in V\backslash\{\mathbf{0}\}}\frac{\|\vv\|_1^2}{\|\vv\|_2^2}\ge {\frac{m}{s}}.
\end{equation*}
\end{definition}

It was pointed in \cite{Zhang:JORSC:2013} that if $\mA \in \mathbb{R}^{m \times n}$ (where $m<n$) is a random matrix with independent and identically distributed (i.i.d.) standard Gaussian entries, then its $(n-m)$-dimensional null space exhibits the $s$-spherical section property for $s=c_1(\log(n/m)+1)$ with a probability of at least $1-e^{-c_0(n-m)}$. Here, $c_0$ and $c_1$ are positive constants that remain independent of $m$ and $n$.

With the concept of the spherical section property, we establish the existence of optimal solutions to the model~\eqref{model:square-new} under suitable assumptions.

\begin{theorem} \label{thm:existence}
For the matrix $\mA$ in  model~\eqref{model:square-new}, suppose that $\mathrm{Ker}(\mA)$ has the $s$-spherical section property for some $s>0$ and there exists a vector $\tilde{\vx}\in \mathbb{R}^n$ such that $\|\tilde{\vx}\|_0<\frac{m}{s}$ and $\mA\tilde{\vx}=\vb$. Then, the set of optimal solutions of \eqref{model:square-new} is nonempty.
\end{theorem}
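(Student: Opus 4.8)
The plan is to extract from the two hypotheses the \emph{strict} inequality $\bar\alpha<\alpha^*$, and then to obtain existence directly from Proposition~\ref{prop:alpha-alpha} together with a standard compactness argument; no serious computation is involved.

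First I would record that, because the ratio $\|\vv\|_1^2/\|\vv\|_2^2$ is scale invariant, the parameter $\alpha^*$ of \eqref{def:alpha-star} equals $\inf_{\vv\in\mathrm{Ker}(\mA)\setminus\{\mathbf{0}\}}\|\vv\|_1^2/\|\vv\|_2^2$, so the $s$-spherical section property of the $(n-m)$-dimensional subspace $\mathrm{Ker}(\mA)$ says exactly that $\alpha^*\ge m/s$. On the feasible side, since $\vb\ne\mathbf{0}$ the witness $\tilde{\vx}$ is nonzero, so $1\le\|\tilde{\vx}\|_0<m/s$, and Cauchy--Schwarz applied on $\mathrm{supp}(\tilde{\vx})$ gives $\|\tilde{\vx}\|_1^2\le\|\tilde{\vx}\|_0\,\|\tilde{\vx}\|_2^2$. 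Combining these,
$$
\bar\alpha\ \le\ \frac{\|\tilde{\vx}\|_1^2}{\|\tilde{\vx}\|_2^2}\ \le\ \|\tilde{\vx}\|_0\ <\ \frac{m}{s}\ \le\ \alpha^*,
$$
so $\bar\alpha$ is finite and lies strictly below $\alpha^*$.

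Next I would invoke Proposition~\ref{prop:alpha-alpha}, which applies since $\mathrm{Ker}(\mA)\ne\{\mathbf{0}\}$ (the spherical section property presupposes $m<n$) and $\{\vx:\mA\vx=\vb\}$ is nonempty (it contains $\tilde{\vx}$). Part~(ii) states that $\bar\alpha=\alpha^*$ holds if and only if \eqref{model:square-new} admits an unbounded minimizing sequence; used in the contrapositive, $\bar\alpha<\alpha^*$ forces every minimizing sequence to be bounded. Then, taking any minimizing sequence $\{\vx_k\}$ for \eqref{model:square-new}, boundedness yields via Bolzano--Weierstrass a subsequence $\vx_{k_j}\to\vx^\sharp$ with $\mA\vx^\sharp=\vb$; since $\vb\ne\mathbf{0}$ we get $\vx^\sharp\ne\mathbf{0}$, so $\vx\mapsto\|\vx\|_1^2/\|\vx\|_2^2$ is continuous at $\vx^\sharp$ and therefore $\|\vx^\sharp\|_1^2/\|\vx^\sharp\|_2^2=\lim_j\|\vx_{k_j}\|_1^2/\|\vx_{k_j}\|_2^2=\bar\alpha$, i.e.\ $\vx^\sharp$ is an optimal solution and the optimal set is nonempty.

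The conceptual heart is the strict gap in the displayed chain above, which is precisely what the two hypotheses buy us; after that everything is routine. I expect the only point needing care to be the continuity of the objective at the candidate minimizer: the ratio is continuous only away from the origin, so one must genuinely use $\vx^\sharp\ne\mathbf{0}$, which comes from the standing assumption $\vb\ne\mathbf{0}$. It is also worth being explicit that $\|\tilde{\vx}\|_0$ is an integer, so that $\|\tilde{\vx}\|_0<m/s$ really delivers the strict inequality $\bar\alpha<\alpha^*$ rather than merely $\le$.
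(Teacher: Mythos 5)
Your proposal is correct and follows essentially the same route as the paper: derive $\bar\alpha\le\|\tilde{\vx}\|_1^2/\|\tilde{\vx}\|_2^2\le\|\tilde{\vx}\|_0<m/s\le\alpha^*$ from Cauchy--Schwarz and the spherical section property, then use Proposition~\ref{prop:alpha-alpha}(ii) to get a bounded minimizing sequence and pass to a convergent subsequence. If anything, your write-up is slightly more careful than the paper's (you use the correct bound $\|\tilde{\vx}\|_1^2\le\|\tilde{\vx}\|_0\,\|\tilde{\vx}\|_2^2$ where the paper's text has the harmless typo $\|\tilde{\vx}\|_0^2$, and you make explicit why the limit point is nonzero so that the objective is continuous there).
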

\begin{proof}\ \ We are aware that $\alpha^* \ge m/s$ due to the definition of $s$-spherical section property of $\mathrm{Ker}(\mA)$, and $\|\tilde{\vx}\|_1^2/\|\tilde{\vx}\|_2^2 \le \|\tilde{\vx}\|_0^2$ by virtue of the Cauchy-Schwarz inequality. We conclude that
$$
\bar{\alpha} \le \frac{\|\tilde{\vx}\|_1^2}{\|\tilde{\vx}\|_2^2}<\frac{m}{s} \le \alpha^*.
$$
By Proposition~\ref{prop:alpha-alpha}, there exists a bounded minimizing sequence $\{\vx_k\}$ for~\eqref{model:square-new}. We can select a convergent subsequence $\{\vx_{k_j}\}$ of $\{\vx_k\}$ so that $\lim_{j \rightarrow \infty}\vx_{k_j} = \vx_*$ satisfies $\mA \vx_*=\vb$. We see that
$$
\frac{\|{\vx}_*\|_1^2}{\|\vx_*\|_2^2}=\lim_{j \rightarrow \infty}\frac{\|{\vx}_{k_j}\|_1^2}{\|\vx_{k_j}\|_2^2}=\bar{\alpha}.
$$
This shows $\vx_*$ is an optimal solution of \eqref{model:square-new}. This completes the proof.
\end{proof}

We note that the above proof essentially mirrors the one provided in \cite[Theorem 3.4]{Zeng-Yi-Pong:SIAMOP:2021}.

\section{The $\tau_2$-Model Can Be Viewed as a Quadratic Programming Problem}\label{sec:QP}

In this section, we will demonstrate that the associated optimization problem from Dinkelbach's procedure for the $\tau_2$-model can be framed as a quadratic programming problem. Specifically, the problem is an LCQP problem when $\epsilon=0$, and a QCQP problem when $\epsilon>0$. 

To achieve this, we rewrite the $\tau_2$-model~\eqref{model:square} as
\begin{equation*}\label{model:square-epsilon}
\bar{\alpha}=\inf\left\{\frac{\|\vx\|_1^2}{\|\vx\|_2^2}: \|\mA \vx- \vb\|_2\le \epsilon, \; \vx\in \mathbb{R}^n\right\}. \tag{$P_\epsilon$}
\end{equation*}
We then define
\begin{equation}\label{model-Trad-FP-square-epsilon}
F(\alpha) = \inf\left\{\|\vx\|_1^2-\alpha \|\vx\|_2^2: \|\mA \vx- \vb\|_2\le \epsilon, \; \vx\in \mathbb{R}^n \right\}, \tag{$Q_\epsilon$}
\end{equation}
which is the Dinkelbach-induced function of \eqref{model:square-epsilon}. For the remainder of the paper, we use the terms $\tau_2$-model and Problem~\eqref{model:square-epsilon} interchangeably.

Assuming the existence of optimal solutions to Problem~\eqref{model:square-epsilon}, solving this problem is equivalent to identifying a numerical value at which the optimal value of Problem \eqref{model-Trad-FP-square-epsilon} becomes zero. Consequently, a deeper understanding of Problem \eqref{model-Trad-FP-square-epsilon} is crucial. In this section, we posit that  Problem \eqref{model-Trad-FP-square-epsilon} effectively constitutes a quadratic programming problem. As a result, existing algorithms and theories related to quadratic programming can be seamlessly applied to address Problem \eqref{model-Trad-FP-square-epsilon}. 

To this end, for $\vx\in \mathbb{R}^n$, we define
$$
\vx_{+}=\max\{\vx,\bm{0}\} \quad \mbox{and} \quad \vx_{-}=-\min\{\vx,\bm{0}\}.
$$
Here, the vector $\vx_{+}$ precisely captures the positive entries of $\vx$, while setting the remaining of $\vx$ to zero. Similarly, the vector $\vx_{-}$ precisely records the absolute values of the negative entries of $\vx$, while setting the remaining values of $\vx$ to zero. It is evident that  both $\vx_{+}$ and $\vx_{-}$ belong to $\mathbb{R}^n$ and are non-negative. We then express $\vx$ as the difference between $\vx_{+}$ and $\vx_{-}$, that is,  $\vx=\vx_{+}-\vx_{-}$. Furthermore, we write
$\vv=\begin{bmatrix}\vx_{+} \\ \vx_{-}\end{bmatrix}.$

\subsection{An Indefinite Quadratic Form of the Objective Function for Problem \eqref{model-Trad-FP-square-epsilon}}

Note that 
 $$
\|\vx\|_1 = \|\vx_{+}\|_1+\|\vx_{-}\|_1=\begin{bmatrix}   \vx_{+}^\top &\vx_{-}^\top  \end{bmatrix}\begin{bmatrix}   \ve \\ \ve \end{bmatrix} 
$$
and
$$
\|\vx\|_2^2 = \begin{bmatrix} \vx_{+}^\top &\vx_{-}^\top  \end{bmatrix} \begin{bmatrix} \mI&-\mI \\ -\mI&\mI \end{bmatrix}  \begin{bmatrix} \vx_{+} \\ \vx_{-}  \end{bmatrix}.
$$
Here, $\ve$ is the vector of dimensions $n$ with all entries equal to 1, and $\mI$ is the identity matrix $n \times n$. With these preparations, the objective function of \eqref{model-Trad-FP-square-epsilon} can be written as a quadratic form in terms of $\vx_{+}$ and $\vx_{-}$ as follows
$$
\|\vx\|_1^2 -\alpha \|\vx\|_2^2 =
\begin{bmatrix}\vx_{+}^\top & \vx_{-}^\top\end{bmatrix}
\begin{bmatrix}\ve\ve^\top-\alpha \mI &  \ve\ve^\top+\alpha \mI\\
\ve\ve^\top+\alpha \mI & \ve\ve^\top-\alpha \mI\end{bmatrix}\begin{bmatrix}\vx_{+} \\ \vx_{-}\end{bmatrix}.
$$
Write
\begin{equation}\label{eq:H}
\mH = \begin{bmatrix}\ve\ve^\top-\alpha \mI &  \ve\ve^\top+\alpha \mI\\
\ve\ve^\top+\alpha \mI & \ve\ve^\top-\alpha \mI\end{bmatrix}. 
\end{equation}
Thus, the objective function of Problem \eqref{model-Trad-FP-square-epsilon} can be expressed as 
\begin{equation}\label{model:original-QP-0}
f(\vv):=\vv^\top  \mH\  \vv,
\end{equation}
which represents a quadratic form in terms of the vector $\vv$.

To demonstrate the indefiniteness of this quadratic form, let $\mD$ be the $n\times n$ discrete cosine transform matrix of second kind, which the $(i,j)$th entry of $\mD$ is given by
$$
\sqrt{\frac{2-\delta_{1i}}{n}} \cos \left(\frac{(i-1)(2j-1)\pi}{2n}\right), \quad 1\le i,j \le n,
$$
where $\delta_{ij}$ is the Kronecker delta. We note that $\mD$ is an orthogonal matrix, i.e., $\mD \mD^\top=\mI$. 

\begin{proposition}\label{prop:H-eigendecomp}
The matrix $\mH$ given in \eqref{eq:H} has eigenvalues $2n$, $-2\alpha$, $0$ with multiplicity $1$, $n$, and $n-1$, respectively.  The corresponding eigenvectors are from the columns of $\mathrm{diag}(\mD^\top, \mD^\top)\mE$, where $\mD$ is the discrete cosine transform matrix of size $n \times n$ and
$$
\mE=\frac{\sqrt{2}}{2}\begin{bmatrix}\mE_{11}&\mE_{11} \\ \mE_{11}&-\mE_{11}\end{bmatrix}  + \frac{\sqrt{2}}{2}\sum_{k=2}^n \begin{bmatrix}\mE_{kk}& \mE_{kk} \\ -\mE_{kk}&\mE_{kk}\end{bmatrix}
$$
with $\mE_{ij}$ being the $n\times n$ matrix having a single  nonzero entry $1$ at its $i$th row and $j$th column.
\end{proposition}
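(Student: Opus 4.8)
The plan is to exploit the $2\times2$ block structure of $\mH$. Write $\mH=\begin{bmatrix}\mB_1&\mB_2\\\mB_2&\mB_1\end{bmatrix}$ with $\mB_1:=\ve\ve^\top-\alpha\mI$ and $\mB_2:=\ve\ve^\top+\alpha\mI$. For any matrix of this form one has
$$
\mH\begin{bmatrix}\vx\\\vx\end{bmatrix}=\begin{bmatrix}(\mB_1+\mB_2)\vx\\(\mB_1+\mB_2)\vx\end{bmatrix},\qquad
\mH\begin{bmatrix}\vx\\-\vx\end{bmatrix}=\begin{bmatrix}(\mB_1-\mB_2)\vx\\-(\mB_1-\mB_2)\vx\end{bmatrix},
$$
so $[\vx;\vx]$ is an eigenvector of $\mH$ whenever $\vx$ is an eigenvector of $\mB_1+\mB_2$ (with the same eigenvalue), and $[\vx;-\vx]$ is an eigenvector of $\mH$ whenever $\vx$ is an eigenvector of $\mB_1-\mB_2$ (with the same eigenvalue). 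Here $\mB_1+\mB_2=2\ve\ve^\top$ and $\mB_1-\mB_2=-2\alpha\mI$, so the whole computation reduces to diagonalizing the single rank-one matrix $2\ve\ve^\top$.

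For that I would use the two facts about the DCT-II matrix $\mD$ that matter here: $\mD$ is orthogonal, and its first row equals $\tfrac{1}{\sqrt n}\ve^\top$ (set $i=1$ in the entry formula, where $\delta_{1i}=1$ and the cosine equals $1$). Writing $\vd_j$ for the $j$-th column of $\mD^\top$, this says $\vd_1=\tfrac{1}{\sqrt n}\ve$ while $\vd_2,\dots,\vd_n$ are unit vectors orthogonal to $\ve$; hence $2\ve\ve^\top\vd_1=2n\,\vd_1$ and $2\ve\ve^\top\vd_j=\mathbf 0$ for $j\ge2$, so $\{\vd_j\}_{j=1}^n$ is an orthonormal eigenbasis of $2\ve\ve^\top$ with eigenvalues $2n,0,\dots,0$. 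Combining this with the identities above, the $2n$ mutually orthonormal vectors $\tfrac{1}{\sqrt2}[\vd_1;\vd_1]$, $\tfrac{1}{\sqrt2}[\vd_j;\vd_j]$ for $2\le j\le n$, and $\tfrac{1}{\sqrt2}[\vd_j;-\vd_j]$ for $1\le j\le n$ are eigenvectors of $\mH$ with eigenvalues $2n$, $0$ and $-2\alpha$, and they form a complete eigenbasis; this already yields the eigenvalues $2n$, $-2\alpha$, $0$ with multiplicities $1$, $n$, $n-1$.

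Finally I would check that these eigenvectors are exactly the columns of $\mathrm{diag}(\mD^\top,\mD^\top)\mE$. Expanding $\mE$ and using $\sum_{k=1}^n\mE_{kk}=\mI$ gives
$$
\mE=\frac{1}{\sqrt2}\begin{bmatrix}\mI&\mI\\2\mE_{11}-\mI&\mI-2\mE_{11}\end{bmatrix},
$$
so, with $\vect{u}_j$ denoting the $j$-th canonical basis vector of $\R^n$, the columns of $\mE$ are $\tfrac{1}{\sqrt2}[\vect{u}_1;\vect{u}_1]$ (column $1$), $\tfrac{1}{\sqrt2}[\vect{u}_j;-\vect{u}_j]$ for $2\le j\le n$ (columns $2,\dots,n$), $\tfrac{1}{\sqrt2}[\vect{u}_1;-\vect{u}_1]$ (column $n+1$), and $\tfrac{1}{\sqrt2}[\vect{u}_j;\vect{u}_j]$ for $2\le j\le n$ (columns $n+2,\dots,2n$); left-multiplying by $\mathrm{diag}(\mD^\top,\mD^\top)$ turns each $\vect{u}_j$ into $\vd_j$. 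Hence, in this ordering, column $1$ carries eigenvalue $2n$, columns $2,\dots,n+1$ carry $-2\alpha$, and columns $n+2,\dots,2n$ carry $0$, which matches the multiplicities $1$, $n$, $n-1$ and proves the proposition. I expect the only step needing any care to be this last piece of bookkeeping — tracking which columns of $\mE$ have sign pattern $[\vect{u}_j;\vect{u}_j]$ and which have $[\vect{u}_j;-\vect{u}_j]$, hence which eigenvalue — together with correctly reading off the first row of $\mD$; the block decomposition and the two displayed eigenvector identities are routine.
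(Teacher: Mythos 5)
Your proof is correct and uses essentially the same two ingredients as the paper's: the DCT diagonalization of the rank-one matrix $\ve\ve^\top$ and the symmetric/antisymmetric splitting of the $2\times 2$ block structure, merely applied in the opposite order (you reduce to $\mB_1\pm\mB_2$ first and then apply $\mD$, whereas the paper conjugates by $\mathrm{diag}(\mD,\mD)$ first and then diagonalizes the resulting $2\times 2$ blocks). Your bookkeeping of the columns of $\mE$ and the resulting multiplicities is accurate, so nothing is missing.
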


\begin{proof}\ \ Since $\ve \ve^\top$ is a rank-one matrix, we know that 
$$
\mD \ve \ve^\top \mD^\top = \Gamma,
$$
where $\Gamma$ is a diagonal matrix whose first diagonal element is $n$ and the rest are zero.  As a consequence, we have $\mD^\top (\ve \ve^\top \pm \alpha \mI)\mD = \Gamma \pm \alpha \mI$ which further implies 
$$
\begin{bmatrix}\mD&\\ &\mD\end{bmatrix} \mH \begin{bmatrix}\mD^\top&\\ &\mD^\top\end{bmatrix}
=\begin{bmatrix}
\Gamma-\alpha \mI&\Gamma+\alpha \mI\\
\Gamma+\alpha \mI&\Gamma-\alpha \mI
\end{bmatrix}.
$$
The matrix on the right hand side, after row and column permutations, is similar to the block diagonal matrix with one $\begin{bmatrix}n-\alpha&n+\alpha\\ n+\alpha&n-\alpha\end{bmatrix}$ block and the other $n-1$ identical blocks of $\begin{bmatrix}-\alpha&\alpha \\ \alpha &-\alpha\end{bmatrix}$ on its diagonal.  Note that
$$
\begin{bmatrix}n-\alpha&n+\alpha\\ n+\alpha&n-\alpha\end{bmatrix} = \begin{bmatrix}\frac{\sqrt{2}}{2}&\frac{\sqrt{2}}{2} \\ \frac{\sqrt{2}}{2}&-\frac{\sqrt{2}}{2}\end{bmatrix}\begin{bmatrix}2n&\\ &-2\alpha\end{bmatrix} \begin{bmatrix}\frac{\sqrt{2}}{2}&\frac{\sqrt{2}}{2} \\ \frac{\sqrt{2}}{2}&-\frac{\sqrt{2}}{2}\end{bmatrix}
$$
and
$$
\begin{bmatrix}-\alpha&\alpha\\ \alpha&-\alpha\end{bmatrix} = \begin{bmatrix}\frac{\sqrt{2}}{2}&\frac{\sqrt{2}}{2} \\ -\frac{\sqrt{2}}{2}&\frac{\sqrt{2}}{2}\end{bmatrix}\begin{bmatrix}-2\alpha&\\ & 0\end{bmatrix} \begin{bmatrix}\frac{\sqrt{2}}{2}& -\frac{\sqrt{2}}{2} \\ \frac{\sqrt{2}}{2}&\frac{\sqrt{2}}{2}\end{bmatrix}.
$$
Hence,
$$
 \mH =\begin{bmatrix}\mD^\top&\\ &\mD^\top\end{bmatrix} \mE^\top\cdot
\mathrm{diag}\left(2n, \underbrace{-2\alpha, \ldots, -2\alpha}_{\mbox{$n$ terms}}, \underbrace{0,\ldots, 0}_{\mbox{$(n-1)$ terms}}\right) \cdot \mE \begin{bmatrix}\mD&\\ &\mD\end{bmatrix}.
$$
This completes the proof.
\end{proof}

According to Proposition~\ref{prop:H-eigendecomp}, the quadratic form $f$ in equation \eqref{model:original-QP-0} is indefinite when $\alpha \neq 0$ and semi-positive definite otherwise. 

\subsection{The LCQP or QCQP Reformulation  for Problem \eqref{model-Trad-FP-square-epsilon}}

In the previous subsection, we demonstrated that the objective function of Problem \eqref{model-Trad-FP-square-epsilon} is expressed as a quadratic function. In this subsection, we identify  Problem \eqref{model-Trad-FP-square-epsilon} as either an LCQP or a QCQP problem. 

To this end, we examine  the feasible set of Problem \eqref{model-Trad-FP-square-epsilon} in terms of the vector $\vv$, given by
$$
\Omega_\epsilon=\{\vv \in \mathbb{R}^{2n}: \left\|\begin{bmatrix}\mA & -\mA\end{bmatrix} \vv - \vb\right\|_2 \le \epsilon \;\mbox{and} \; \vv \ge \vzero\}.
$$  
We consider two cases: $\epsilon=0$ and $\epsilon>0$. 

Case 1: $\epsilon=0$.  In this case, the feasible set simplifies to  
$$
\Omega_\epsilon=\{\vv \in \mathbb{R}^{2n}: \begin{bmatrix}\mA & -\mA\end{bmatrix} \vv = \vb, \; \mbox{and} \; \vv \ge \vzero\}.
$$  
Let $\mathcal{E}=\{1,2,\ldots,m\}$, $b_i$ be the $i$th element of $\vb$, and the vector $\vc_i$ be the $i$th column of $\begin{bmatrix}\mA & -\mA\end{bmatrix}^\top$ for $i \in \mathcal{E}$. Then, the optimization problem~\eqref{model-Trad-FP-square-epsilon} can be written as
\begin{eqnarray}
\mbox{minimize} && f(\vv) = \vv^\top  \mH\  \vv  \label{model:original-QP-1} \\
\mbox{subject to} && \langle \vc_i, \vv\rangle = b_i, \quad \quad i \in \mathcal{E}   \label{model:original-QP-2} \\
&&\vv \ge \vzero.   \label{model:original-QP-3}
\end{eqnarray}
This represents a typical LCQP problem.

Case 2:  $\epsilon>0$. Here, the feasible set becomes  
$$
\Omega_\epsilon=\{\vv \in \mathbb{R}^{2n}: \vv^\top \begin{bmatrix}\mA^\top \mA&-\mA^\top \mA\\ -\mA^\top \mA &\mA^\top \mA \end{bmatrix}\vv-2\left\langle\begin{bmatrix}\mA^\top \vb \\ -\mA^\top \vb\end{bmatrix}, \vv \right\rangle+(\|\vb\|_2^2-\epsilon^2) \le 0 \;\mbox{and} \; \vv \ge \vzero\}.
$$ 
Consequently, the optimization problem~\eqref{model-Trad-FP-square-epsilon} can be reformulated as
\begin{eqnarray}
\mbox{minimize}&& f(\vv) = \vv^\top  \mH\  \vv  \label{model:original-QP-1-epsilon} \\
\mbox{subject to} && \vv^\top \begin{bmatrix}\mA^\top \mA&-\mA^\top \mA\\ -\mA^\top \mA &\mA^\top \mA \end{bmatrix}\vv-2\left\langle\begin{bmatrix}\mA^\top \vb \\ -\mA^\top \vb\end{bmatrix}, \vv \right\rangle+(\|\vb\|_2^2-\epsilon^2) \le 0  \label{model:original-QP-2-epsilon} \\
&&\vv \ge \vzero.   \label{model:original-QP-3-epsilon}
\end{eqnarray}
This represents a typical QCQP problem.

Both the LCQP and the QCQP formulations presented above are indefinite, making them challenging to solve. To address this challenge, we can convexify the objective function by linearizing the term $\alpha \|\vx\|_2^2$, which is a common practice. The linearization of  $\alpha \|\vx\|_2^2$,  achieved by omitting the constant term, takes the form $2\alpha \langle \va,\vx \rangle$ for some vector $\va$. As a result, Problem~\eqref{model-Trad-FP-square-epsilon} transforms into the following
\begin{equation}\label{model:Linerization}
\argmin_{\vx\in \mathbb{R}^n}\{\|\vx\|_1^2-2\alpha \langle \va,\vx \rangle: \|\mA \vx- \vb\|_2\le \epsilon\}.
\end{equation}
With
$$
\mH=\begin{bmatrix}\ve\ve^\top&\ve\ve^\top\\ \ve\ve^\top&\ve\ve^\top\end{bmatrix} \quad \mbox{and}\quad \vq=\begin{bmatrix}\va \\ -\va \end{bmatrix}, 
$$
the optimization problem~\eqref{model:Linerization} can be reformulated as an LCQP if $\epsilon=0$ or a QCQP if $\epsilon>0$, by simply replacing the objective function in \eqref{model:original-QP-1} and  \eqref{model:original-QP-1-epsilon} with 
$$
\min_{\vv \in \mathbb{R}^{2n}} f(\vv) = \vv^\top  \mH\  \vv +2 \alpha \langle \vq,\vv \rangle.  
$$
By Proposition~\ref{prop:H-eigendecomp}, the matrix $\mH$ has $2n$ as its only nonzero eigenvalue. Hence, the corresponding linearized LCQP and QCQP are typical convex quadratic programming problems.

\section{Algorithms for the $\tau_2$-Model} \label{sec:convergence}

In this section, we develop algorithms to solve the $\tau_2$-model, that is, the optimization problem~\eqref{model:square-epsilon}. We initially employ Dinkelbach’s procedure, leading to an iterative scheme as follows: starting with $\vx^{(0)}$ such that $\mA \vx^{(0)}=\vb$, iterate
\begin{align} \label{eqn:scheme} \tag{${Q}^k_\epsilon$}
\begin{cases}
  \vx^{(k+1)}&=\arg \inf \{\|\vx\|_1^2 - \alpha^{(k)} \|\vx\|_2^2: \|\mA\vx-\vb\|_2 \le \epsilon\};\\
  \alpha^{(k+1)}&=\displaystyle\frac{\|\vx^{(k+1)}\|_1^2}{\|\vx^{(k+1)}\|_2^2}.
\end{cases}
\end{align}
The optimization problem~\eqref{eqn:scheme} is nonconvex, as indicated by Proposition~\ref{prop:H-eigendecomp}, and may not be solvable, as asserted by Proposition~\ref{prop:alpha-alpha}. To overcome this, as mentioned in the previous section, we propose to lineralize the term  $\|\vx\|_2^2$ in the objective function at the point $\vx^{(k)}$, resulting in the following iterative scheme:   
\begin{align} \label{eqn:scheme-linearize} \tag{${L}^k_\epsilon$}
\begin{cases}
  \vx^{(k+1)}&=\arg \min \{\|\vx\|_1^2 - 2\alpha^{(k)} \langle \vx^{(k)}, \vx \rangle: \|\mA\vx-\vb\|_2 \le \epsilon\};\\
  \alpha^{(k+1)}&=\displaystyle\frac{\|\vx^{(k+1)}\|_1^2}{\|\vx^{(k+1)}\|_2^2}.
\end{cases}
\end{align}

In the following two subsections, we first prove that the sequence $\{\vx^{(k)}\}$ generated by \eqref{eqn:scheme-linearize} converges to a stationary point of problem~\eqref{model:square-epsilon}. Secondly, we propose two different approaches to solve the optimization problem in \eqref{eqn:scheme-linearize}.

\subsection{Convergence}

The iterative scheme in \eqref{eqn:scheme-linearize} generates two sequences $\{\alpha^{(k)}\}$ and $\{\vx^{(k)}\}$. To investigate the convergence of the sequence $\{\vx^{(k)}\}$, we need to recall some concepts and establish supporting lemmas. 

A extended real-valued function $f:\mathbb{R}^n \rightarrow (-\infty, \infty]$ is said to be proper if its domain $\mathrm{dom} f:=\{\vx: f(\vx)<\infty\}$. Additionally, a proper function $f$ is said to be closed if it is lower semi-continuous. For a proper closed function $f$, the regular subdifferential $\hat{\partial} f(\bar{\vx})$  and the limiting subdifferential $\partial f(\bar{\vx})$ at $\bar{\vx}\in \mathrm{dom} f$   are given respectively as 
\begin{eqnarray*}
    \hat{\partial} f(\bar{\vx})&:=&\left\{\vv: \lim_{\vx \rightarrow \bar{\vx}}\inf_{\vx \neq \bar{\vx}} \frac{f(\vx)-f(\Bar{\vx})-\langle \vv, \vx-\Bar{\vx}\rangle}{\|\vx-\bar{\vx}\|_2}\ge 0 \right\},\\
    \partial f(\bar{\vx})&:=&\left\{\vv: \exists  \vx^{(t)} \stackrel{f}{\rightarrow} \bar{\vx}\; \mbox{and}\; \vv^{(t)}\in \hat{\partial}f(\vx^{(t)}) \; \mbox{with} \; \vv^{(t)} \rightarrow \vv \right\}, 
\end{eqnarray*}
where $\vx^{(t)} \stackrel{f}{\rightarrow} \bar{\vx}$ means $\vx^{(t)} \rightarrow \vx$ and $f(\vx^{(t)}) \rightarrow f(\vx)$. For a proper closed function $f$, we state that $\bar{\vx}$ is a stationary point of $f$ and that $0 \in \partial f(\bar{\vx})$. 

For a closed non-empty set $C$, we define the indicator function $\iota_C(\vx)$ as $0$ if $\vx\in C$ and $\infty$ otherwise. 

With these notations, the constrained optimization problem~\eqref{model:square-epsilon} can be reformulated as an unconstrained optimization problem with the following objective function: 
\begin{equation}\label{def:G}
G(\vx)=\frac{\|\vx\|_1^2}{\|\vx\|_2^2} + \iota_{\mathcal{B}_\epsilon(\mathbf{0})}(\mA \vx-\vb),
\end{equation}
where $\mathcal{B}_\epsilon(\vz)$ is the ball centered at $\vz$ with radius $\epsilon$. 
For this function $G$, we know $\mathrm{dom} G:=\{\vx: \|\mA\vx-\vb\|_2 \le \epsilon\}$ and $G$ is continuous on its domain. On the $\mathrm{dom} G$, by the calculus of subdifferentials, it holds that 
\begin{equation}\label{eq:partial-G}
\partial G(\vx) = \left(\frac{2\|\vx\|_1}{\|\vx\|_2^2}\right)\partial \|\cdot\|_1(\vx) -\left(\frac{2\|\vx\|_1^2}{\|\vx\|_2^4}\right) \vx+\mA^\top \partial\iota_{\mathcal{B}_\epsilon(\mathbf{0})}(\mA \vx-\vb).    
\end{equation}
We see from the above relation that $\mathrm{dom} \partial G=\mathrm{dom} G$. 

Next, let us first review the definition of the Kurdyka-\L ojasiewicz (KL) property of a function and recall a convergence theorem on a function having the KL property.

Let $f:\mathbb{R}^d\to (-\infty,+\infty]$ be proper and lower semicontinuous.  We note that $[f<\mu]:=\{\vx\in \mathbb{R}^n: f(\vx)<\mu\}$ and $[\eta<f<\mu]:=\{\vx\in \mathbb{R}^n: \eta<f(\vx)<\mu\}$. Let $r_0>0$ and set 
$$
\mathcal{K}(r_0):=\{\varphi: \varphi \in C^0([0,r_0))\cap C^1((0,r_0)), \; \varphi(0)=0, \; \varphi\; \mbox{is concave and}\; \varphi' >0 \}. 
$$
The function $f$ satisfies the KL inequality (or has KL property) locally  at $\tilde{\vx}\in \dom \partial f$ if there exist $r_0>0$, $\varphi \in \mathcal{K}(r_0)$ and a neighborhood $U(\tilde{\vx})$ of $\tilde{\vx}$ such that
\begin{equation}\label{KL-inequality}
\varphi'(f(\vx)-f(\tilde{\vx}))\text{dist}(\mathbf{0},\partial f(\vx))\geq 1
\end{equation}
for all $\vx \in U(\tilde{\vx}) \cap [f(\tilde{\vx})<f(\vx)<f(\tilde{\vx})+r_0]$. The function $f$ has the KL property on $S$ if it does so at each point of $S$.

Since $\|\cdot\|_1^2$, $\|\cdot\|_2^2$, and $\iota_{\{\mathbf{0}\}}$ are semialgebraic, so is $G$. Therefore, $G$ has the KL property in its domain; see \cite{Attouch-Bolte-Svaiter:MP:13}.

The convergence analysis of the sequence $\{\vx^{(k)}\}$ is motivated by the inexact descent convergence results for KL functions in \cite{Attouch-Bolte-Svaiter:MP:13}. Here are three essential conditions to guarantee the convergence of the sequence $\{\vx^{(k)}\}$  generated by \eqref{eqn:scheme-linearize}.
\begin{enumerate}
\item[\textbf{(H1)}] \textbf{Sufficient descent condition:} There exists a positive constant $c_1$ such that for $\forall k\in \mathbb{N}$,
\begin{displaymath}
c_1 \|\vx^{(k+1)}-\vx^{(k)}\|_2^2\leq G(\vx^{(k)})-G(\vx^{(k+1)}).
\end{displaymath}
\item[\textbf{(H2)}] \textbf{Relative error condition:} There exists a positive constant $c_2$ such that  for $\forall k\in \mathbb{N}$,
\begin{displaymath}
\|\mathbf{\omega}^{(k+1)} \|_2\leq c_2\|\vx^{(k+1)}-\vx^{(k)}\|_2\quad\text{ and } \quad \mathbf{\omega}^{(k+1)}\in \partial G(\vx^{(k+1)}).
\end{displaymath}
\item[\textbf{(H3)}] \textbf{Continuity condition:} There exists a subsequence $\{\vx^{(k_t)}\}_{t\in\mathbb{N}}$ and $\vx^*$ such that
\begin{displaymath}
\lim_{t\to\infty}\vx^{(k_t)}= \vx^*\quad\text{ and }\quad \lim_{t\to\infty}G(\vx^{(k_t)})= G(\vx^*).
\end{displaymath}
\end{enumerate}

Before we present the convergence analysis of the sequence $\{\vx^{(k)}\}$  generated by \eqref{eqn:scheme-linearize}, we require two additional lemmas. The following lemma addresses the monotonic decreasing property of the sequence $\{\alpha^{(k)}\}$.
\begin{lemma}\label{lemma:alpha-decreasing}
The sequence $\{\alpha^{(k)}\}$ generated by \eqref{eqn:scheme-linearize} is decreasing.
\end{lemma}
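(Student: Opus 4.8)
The plan is to show directly from the update rules in \eqref{eqn:scheme-linearize} that $\alpha^{(k+1)}\le\alpha^{(k)}$ for every $k$. Recall that $\vx^{(k+1)}$ is a minimizer of $h(\vx):=\|\vx\|_1^2-2\alpha^{(k)}\langle\vx^{(k)},\vx\rangle$ over the feasible set $\{\vx:\|\mA\vx-\vb\|_2\le\epsilon\}$, and that $\vx^{(k)}$ is itself feasible (indeed, $\alpha^{(k)}=\|\vx^{(k)}\|_1^2/\|\vx^{(k)}\|_2^2$ presupposes $\vx^{(k)}$ is a feasible iterate). Hence I would start from the inequality
\begin{equation*}
\|\vx^{(k+1)}\|_1^2-2\alpha^{(k)}\langle\vx^{(k)},\vx^{(k+1)}\rangle \;\le\; \|\vx^{(k)}\|_1^2-2\alpha^{(k)}\langle\vx^{(k)},\vx^{(k)}\rangle \;=\; \|\vx^{(k)}\|_1^2-2\alpha^{(k)}\|\vx^{(k)}\|_2^2.
\end{equation*}
Since $\alpha^{(k)}\|\vx^{(k)}\|_2^2=\|\vx^{(k)}\|_1^2$ by definition of $\alpha^{(k)}$, the right-hand side simplifies to $-\|\vx^{(k)}\|_1^2 = -\alpha^{(k)}\|\vx^{(k)}\|_2^2$. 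So the key consequence of optimality is
\begin{equation*}
\|\vx^{(k+1)}\|_1^2 \;\le\; 2\alpha^{(k)}\langle\vx^{(k)},\vx^{(k+1)}\rangle-\alpha^{(k)}\|\vx^{(k)}\|_2^2 .
\end{equation*}

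Next I would bound the right-hand side using the elementary inequality $2\langle\vx^{(k)},\vx^{(k+1)}\rangle\le\|\vx^{(k)}\|_2^2+\|\vx^{(k+1)}\|_2^2$ (equivalently $\|\vx^{(k)}-\vx^{(k+1)}\|_2^2\ge0$), together with $\alpha^{(k)}\ge0$. This yields
\begin{equation*}
\|\vx^{(k+1)}\|_1^2 \;\le\; \alpha^{(k)}\big(\|\vx^{(k)}\|_2^2+\|\vx^{(k+1)}\|_2^2\big)-\alpha^{(k)}\|\vx^{(k)}\|_2^2 \;=\; \alpha^{(k)}\|\vx^{(k+1)}\|_2^2 .
\end{equation*}
Dividing by $\|\vx^{(k+1)}\|_2^2>0$ gives exactly $\alpha^{(k+1)}=\|\vx^{(k+1)}\|_1^2/\|\vx^{(k+1)}\|_2^2\le\alpha^{(k)}$, which is the claim. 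One should also note $\vx^{(k+1)}\ne\mathbf{0}$ so the ratio is well defined; this holds because $\vb\ne\mathbf{0}$ forces every feasible point to be nonzero once $\epsilon<\|\vb\|_2$ (and in the noiseless case trivially), so the division is legitimate.

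The argument is short and the only genuine obstacle is bookkeeping: one must be careful that $\vx^{(k)}$ is feasible for the subproblem defining $\vx^{(k+1)}$ (so that it can be used as a competitor), and that $\alpha^{(k)}\ge 0$ so the discarded term $\alpha^{(k)}\|\vx^{(k)}-\vx^{(k+1)}\|_2^2$ has the right sign. Both are immediate from the construction of the scheme. I would therefore present the proof as the two displayed chains above, followed by the one-line division, and remark that equality $\alpha^{(k+1)}=\alpha^{(k)}$ forces $\vx^{(k+1)}=\vx^{(k)}$ (from the slack in the Cauchy–Schwarz/AM–GM step), which is the natural termination criterion for the algorithm.
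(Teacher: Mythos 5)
Your proof is correct and follows essentially the same route as the paper: both start from the optimality of $\vx^{(k+1)}$ in the linearized subproblem with $\vx^{(k)}$ as the feasible competitor, substitute $\|\vx^{(k)}\|_1^2=\alpha^{(k)}\|\vx^{(k)}\|_2^2$, and discard the nonnegative term $\alpha^{(k)}\|\vx^{(k+1)}-\vx^{(k)}\|_2^2$ before dividing by $\|\vx^{(k+1)}\|_2^2$. The paper merely records the slightly sharper intermediate bound $\alpha^{(k+1)}\le\alpha^{(k)}\bigl(1-\|\vx^{(k+1)}-\vx^{(k)}\|_2^2/\|\vx^{(k+1)}\|_2^2\bigr)$ (which is also what your closing remark about the equality case rests on), but the argument is the same.
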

\begin{proof}\ \ Since the vector $\vx^{(k+1)}$ is the minimizer of the optimization problem in \eqref{eqn:scheme-linearize}, we have
$$
\|\vx^{(k+1)}\|_1^2 - 2\alpha^{(k)} \langle \vx^{(k)}, \vx^{(k+1)} \rangle \le \|\vx^{(k)}\|_1^2 - 2\alpha^{(k)} \|\vx^{(k)}\|_2^2.
$$ 
This leads to:
$$
\alpha^{(k+1)}=\frac{\|\vx^{(k+1)}\|_1^2}{\|\vx^{(k+1)}\|_2^2}\le \frac{2\alpha^{(k)} \langle \vx^{(k)}, \vx^{(k+1)} \rangle + \|\vx^{(k)}\|_1^2 - 2\alpha^{(k)} \|\vx^{(k)}\|_2^2}{\|\vx^{(k+1)}\|_2^2}.
$$
Using the fact that $\|\vx^{(k)}\|_1^2=\alpha^{(k)}\|\vx^{(k)}\|_2^2$, we have 
$$2\alpha^{(k)} \langle \vx^{(k)}, \vx^{(k+1)} \rangle + \|\vx^{(k)}\|_1^2 - 2\alpha^{(k)} \|\vx^{(k)}\|_2^2=\alpha^{(k)}(\|\vx^{(k+1)}\|_2^2-\|\vx^{(k+1)}-\vx^{(k)}\|_2^2).$$ 
Thus, 
$$
\alpha^{(k+1)} \le \alpha^{(k)} \left(1-\frac{\|\vx^{(k+1)}-\vx^{(k)}\|_2^2}{\|\vx^{(k+1)}\|_2^2}\right).
$$
This implies $\alpha^{(k+1)} \le \alpha^{(k)}$, which completes the proof. 
\end{proof}

The next lemma is also necessary in our convergence analysis. 
\begin{lemma} \label{lemma:lipschitz}
Define
$$
\Phi(\vx):=\frac{\|\vx\|_1^2}{\|\vx\|_2^2}\vx.
$$
Then, for any $\vx, \vy\in \mathbb{R}^n$, we have
$$
\|\Phi(\vx)-\Phi(\vy)\| \le 5n \|\vx-\vy\|_2.
$$
\end{lemma}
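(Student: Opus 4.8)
The plan is to avoid differentiating $\Phi$ and instead exploit that $\Phi(\vx)=\tau_2(\vx)\,\vx$, where $\tau_2(\vx)=\|\vx\|_1^2/\|\vx\|_2^2$ is scale-invariant and obeys $1\le\tau_2(\vx)\le n$ for all $\vx\ne\vzero$; I extend $\Phi$ to the origin by its continuous value $\Phi(\vzero):=\vzero$ (legitimate since $\|\Phi(\vx)\|_2\le n\|\vx\|_2$). If $\vx$ or $\vy$ vanishes, say $\vy=\vzero$, the claim is immediate: $\|\Phi(\vx)-\Phi(\vy)\|_2=\tau_2(\vx)\|\vx\|_2\le n\|\vx\|_2=n\|\vx-\vy\|_2$. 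So assume $\vx,\vy\ne\vzero$ and use the telescoping identity
$$
\Phi(\vx)-\Phi(\vy)=\tau_2(\vy)\,(\vx-\vy)+\bigl(\tau_2(\vx)-\tau_2(\vy)\bigr)\vx .
$$
The first term has $\ell_2$-norm at most $n\|\vx-\vy\|_2$; the task is to bound the second term by $4n\|\vx-\vy\|_2$.

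For the second term I would normalize onto the unit sphere: with $\hat\vx=\vx/\|\vx\|_2$ and $\hat\vy=\vy/\|\vy\|_2$ one has $\tau_2(\vx)=\|\hat\vx\|_1^2$ and $\tau_2(\vy)=\|\hat\vy\|_1^2$. Factoring the difference of squares and applying Cauchy--Schwarz (together with $\|\hat\vx\|_1,\|\hat\vy\|_1\le\sqrt n$ since both are unit vectors, and $\bigl|\,\|\hat\vx\|_1-\|\hat\vy\|_1\,\bigr|\le\|\hat\vx-\hat\vy\|_1\le\sqrt n\,\|\hat\vx-\hat\vy\|_2$) gives
$$
|\tau_2(\vx)-\tau_2(\vy)|=\bigl(\|\hat\vx\|_1+\|\hat\vy\|_1\bigr)\,\bigl|\,\|\hat\vx\|_1-\|\hat\vy\|_1\,\bigr|\le 2n\,\|\hat\vx-\hat\vy\|_2 .
$$
It remains to control $\|\vx\|_2\,\|\hat\vx-\hat\vy\|_2$; writing $\|\vx\|_2(\hat\vx-\hat\vy)=\vx-\tfrac{\|\vx\|_2}{\|\vy\|_2}\vy=(\vx-\vy)+\tfrac{\|\vy\|_2-\|\vx\|_2}{\|\vy\|_2}\vy$ and combining the triangle inequality with the reverse triangle inequality $\bigl|\,\|\vy\|_2-\|\vx\|_2\,\bigr|\le\|\vx-\vy\|_2$ yields $\|\vx\|_2\,\|\hat\vx-\hat\vy\|_2\le 2\|\vx-\vy\|_2$. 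Hence the second term is bounded by $2n\cdot 2\|\vx-\vy\|_2=4n\|\vx-\vy\|_2$, and adding the two pieces gives $\|\Phi(\vx)-\Phi(\vy)\|_2\le 5n\|\vx-\vy\|_2$.

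The one point that shapes the whole argument, and is the only real obstacle, is that $\tau_2$ is $0$-homogeneous, so there is no bound of the form $|\tau_2(\vx)-\tau_2(\vy)|\le C\|\vx-\vy\|_2$ (rescaling $\vx,\vy$ by a tiny common factor defeats it); the factor $\|\vx\|_2$ produced by the decomposition is exactly what is needed to absorb this, via the normalization step and the estimate $\|\vx\|_2\|\hat\vx-\hat\vy\|_2\le 2\|\vx-\vy\|_2$. An alternative is to differentiate: off the coordinate hyperplanes $D\Phi(\vx)=\tau_2(\vx)\mI+\tfrac{2\|\vx\|_1}{\|\vx\|_2^2}\vx(\sgn\vx)^\top-\tfrac{2\|\vx\|_1^2}{\|\vx\|_2^4}\vx\vx^\top$, whose spectral norm is at most $n+2n+2n=5n$ (using $\|\vx(\sgn\vx)^\top\|_{\mathrm{op}}=\|\vx\|_2\|\sgn\vx\|_2\le\sqrt n\|\vx\|_2$ and $\|\vx\|_1\le\sqrt n\|\vx\|_2$), after which one invokes that a continuous, locally Lipschitz map with a.e. Jacobian bound $L$ is globally $L$-Lipschitz; but this needs an extra check of local Lipschitzness at the origin, so the elementary decomposition is the cleaner route.
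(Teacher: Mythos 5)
Your proof is correct: the telescoping identity $\Phi(\vx)-\Phi(\vy)=\tau_2(\vy)(\vx-\vy)+(\tau_2(\vx)-\tau_2(\vy))\vx$ does expand to $\tau_2(\vx)\vx-\tau_2(\vy)\vy$, the bound $\tau_2(\vy)\le n$ handles the first term, and the chain $|\tau_2(\vx)-\tau_2(\vy)|\le 2n\|\hat\vx-\hat\vy\|_2$ together with $\|\vx\|_2\|\hat\vx-\hat\vy\|_2\le 2\|\vx-\vy\|_2$ correctly yields $4n\|\vx-\vy\|_2$ for the second, giving exactly the constant $5n$. However, your route is genuinely different from the paper's. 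The paper proceeds by what you sketch only as an ``alternative'': it reduces the vector-valued Lipschitz claim to scalar functions along lines, showing $\Phi$ is $L$-Lipschitz iff every $\varphi(t)=\langle\Phi(\vx_0+t\vu),\vv\rangle$ is, then writes $\varphi=f^2g/h^2$ with $f(t)=\|\vx_0+t\vu\|_1$, $g(t)=\langle\vx_0+t\vu,\vv\rangle$, $h(t)=\|\vx_0+t\vu\|_2$, bounds $|\varphi'|\le 2n+n+2n=5n$ almost everywhere using $f\le\sqrt n\,h$, $|g|\le h$, $|f'|\le\sqrt n$, $|g'|,|h'|\le 1$, and invokes a result on absolutely continuous functions to conclude. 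What your decomposition buys is the elimination of the measure-theoretic step and of any worry about the nondifferentiability of $\|\cdot\|_1$ (and of the line $\vx_0+t\vu$ passing through the origin, where $h$ vanishes); it also extends the estimate cleanly to $\vx=\vzero$ or $\vy=\vzero$. What the paper's derivative computation buys is a template that generalizes mechanically to other ratios $\|\cdot\|_1^q/\|\cdot\|_2^q$ of the kind mentioned in the conclusion, where an explicit algebraic factorization of the difference may be less pleasant. Your observation that the $0$-homogeneity of $\tau_2$ forbids a direct bound $|\tau_2(\vx)-\tau_2(\vy)|\le C\|\vx-\vy\|_2$, and that the factor $\|\vx\|_2$ from the decomposition is precisely what absorbs this, is the right way to see why the normalization step is unavoidable.
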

\begin{proof}
First, we claim that the function $\Phi$ is $L$-Lipschitz continuous if and only if for any $\vx_0\in \mathbb{R}^n$ and every pair of unit vector $\vu$ and $\vv$ of $\mathbb{R}^n$, the real-valued function
\begin{equation*}
    \varphi(t)=\langle \Phi(\vx_0 + t\vu), \vv\rangle  
\end{equation*}
is $L$-Lipschitz continuous. Actually, if $\Phi$ is $L$-Lipschitz continuous, then
$$
|\varphi(s)-\varphi(t)\| \le \left |(\Phi(\vx_0 + s\vu)-\Phi(\vx_0 + t\vu))\cdot \vv \right| \le L |(s-t)|\|\vu\|_2\|\vv\|_2=L |(s-t)|.
$$
On the other hand, for any $\vx_0\in \mathbb{R}^n$ and every pair of unit vector $\vu$ and $\vv$ of $\mathbb{R}^n$, if $\varphi$ is $L$-Lipschitz, then  for any vectors $\vx$ and $\vy$ in $\mathbb{R}^n$, by identifying $\vx_0$ and $\vu$ as $\vx$ and $\frac{\vy-\vx}{\|\vy-\vx\|_2}$, respectively, we have that
$$
\|\Phi(\vx)-\Phi(\vy)\|_2 =\max_{\|\vv\|_2=1} \left\langle \Phi(\vx)-\Phi\left(\vx+\frac{\vy-\vx}{\|\vy-\vx\|_2}\|\vy-\vx\|_2\right), \vu\right \rangle \le L\|\vy-\vx\|_2,
$$
which implies $\Phi$ being $L$-Lipschitz continuous.

Next, we focus on showing that $\varphi$ is a Lipschitz continuous function.  For a fixed $\vx_0$ such that $\|\mA \vx_0-\vb\|_2\le \epsilon$, a unit vector $\vu \in \mathrm{Ker} (\mA)$, and a unit vector $\vv$ of $\mathbb{R}^n$, set 
$$f(t)=\|\vx_0+t\vu\|_1, \; g(t)=\langle \vx_0+t\vu, \vv \rangle, \; \mbox{and}\; h(t)=\|\vx_0+t\vu\|_2.
$$
We can check directly that $f$, $g$, and $h$ are Lipschitz continuous with constants $\sqrt{n}$, $1$, and $1$, respectively. Furthermore, both $g$ and $h$ are differentiable, and $f$ is differentiable almost everywhere.

We further define
$$
\varphi(t)=\frac{f(t)^2 g(t)}{h(t)^2}.
$$
From
$$
\varphi'(t)=\frac{2f(t)f'(t)g(t)}{h^2(t)}+\frac{f^2(t) g'(t)}{h^2(t)}-2\frac{f^2(t) g(t) h'(t)}{h^3(t)}
$$
and $f(t) \le \sqrt{n} h(t)$ and $|g(t)|\le h(t)$, we obtain
$$
\sup|\varphi'(t)|\le \sup \frac{2f(t)|f'(t)|g(t)}{h^2(t)}+\frac{f^2(t) |g'(t)|}{h^2(t)}+2\frac{f^2(t) g(t) |h'(t)|}{h^3(t)}\le 5n.
$$
By \cite[Exercise 3.37]{folland1999real}, $\varphi$ is $L$-Lipschitz continuous with $L=5n$, so is the function $\Phi$.
\end{proof}

Now, we are ready to present the convergence of analysis for the sequence $\{\vx^{(k)}\}$ generated by \eqref{eqn:scheme-linearize}.
\begin{theorem}\label{thm:convergence}
Given a sequence $\{\vx^{(k)}, \alpha^{(k)}\}$ generated by \eqref{eqn:scheme-linearize}. If $\{\vx^{(k)}\}$ is bounded, then $\{\vx^{(k)}\}$ converges to a stationary point of $G$ in \eqref{def:G}.
\end{theorem}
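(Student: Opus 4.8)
The plan is to verify the three conditions \textbf{(H1)}--\textbf{(H3)} for the sequence $\{\vx^{(k)}\}$ generated by \eqref{eqn:scheme-linearize}, and then invoke the inexact-descent convergence result for KL functions from \cite{Attouch-Bolte-Svaiter:MP:13}, using the fact (established just above) that $G$ is semialgebraic and hence has the KL property on its domain. Since $\{\vx^{(k)}\}$ is assumed bounded and lies in $\dom G$ (a closed set), it has accumulation points; the real work is the three descent/error/continuity estimates.

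For \textbf{(H1)}, the key input is Lemma~\ref{lemma:alpha-decreasing}, whose proof actually produces the sharper identity $\alpha^{(k+1)}\|\vx^{(k+1)}\|_2^2 \le \alpha^{(k)}\bigl(\|\vx^{(k+1)}\|_2^2-\|\vx^{(k+1)}-\vx^{(k)}\|_2^2\bigr)$. Rearranging gives $G(\vx^{(k)})-G(\vx^{(k+1)}) = \alpha^{(k)}-\alpha^{(k+1)} \ge \alpha^{(k)}\,\|\vx^{(k+1)}-\vx^{(k)}\|_2^2/\|\vx^{(k+1)}\|_2^2$. Since $\{\vx^{(k)}\}$ is bounded, $\|\vx^{(k+1)}\|_2^2 \le M$ for some $M>0$; since $\{\alpha^{(k)}\}$ is decreasing and bounded below by $1$, we also need a uniform \emph{lower} bound on $\alpha^{(k)}$ — but $\alpha^{(k)}\ge 1$ already suffices. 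Hence \textbf{(H1)} holds with $c_1 = 1/M$. (One should note $\|\vx^{(k+1)}\|_2\ne 0$: if some $\vx^{(k)}=\mathbf 0$ the scheme is ill-defined, so we tacitly assume $\vb\ne\mathbf 0$, forcing every feasible iterate to be nonzero when $\epsilon<\|\vb\|_2$; this is the standing hypothesis.)

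For \textbf{(H2)}, I would use the first-order optimality condition for the convex subproblem in \eqref{eqn:scheme-linearize}. Its objective is $\|\vx\|_1^2 - 2\alpha^{(k)}\langle\vx^{(k)},\vx\rangle$ plus the indicator of $\{\vx:\|\mA\vx-\vb\|_2\le\epsilon\}$, so $\mathbf 0 \in 2\|\vx^{(k+1)}\|_1\,\partial\|\cdot\|_1(\vx^{(k+1)}) - 2\alpha^{(k)}\vx^{(k)} + \mA^\top\partial\iota_{\mathcal B_\epsilon(\mathbf 0)}(\mA\vx^{(k+1)}-\vb)$. Comparing with the formula \eqref{eq:partial-G} for $\partial G(\vx^{(k+1)})$, one sees that a suitable element $\vomega^{(k+1)}\in\partial G(\vx^{(k+1)})$ differs from $\mathbf 0$ only through the mismatch between $-2\alpha^{(k)}\vx^{(k)}$ and the correct gradient term $-2\|\vx^{(k+1)}\|_1^2/\|\vx^{(k+1)}\|_2^4\cdot\vx^{(k+1)} = -2\,\Phi(\vx^{(k+1)})/\|\vx^{(k+1)}\|_2^2$ (with $\Phi$ as in Lemma~\ref{lemma:lipschitz}), together with rescaling the $\ell_1$-subgradient from $2\|\vx^{(k+1)}\|_1$ to $2\|\vx^{(k+1)}\|_1/\|\vx^{(k+1)}\|_2^2$. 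Concretely, $\vomega^{(k+1)} = \tfrac{2\alpha^{(k)}}{\|\vx^{(k+1)}\|_2^2}\vx^{(k)} - \tfrac{2}{\|\vx^{(k+1)}\|_2^2}\Phi(\vx^{(k+1)})$ (after dividing the optimality inclusion by $\|\vx^{(k+1)}\|_2^2$ and using $\alpha^{(k)}=\|\vx^{(k)}\|_1^2/\|\vx^{(k)}\|_2^2$), so $\vomega^{(k+1)} = \tfrac{2}{\|\vx^{(k+1)}\|_2^2}\bigl(\Phi(\vx^{(k)}) - \Phi(\vx^{(k+1)})\bigr)$. Then Lemma~\ref{lemma:lipschitz} gives $\|\vomega^{(k+1)}\|_2 \le \tfrac{10n}{\|\vx^{(k+1)}\|_2^2}\|\vx^{(k)}-\vx^{(k+1)}\|_2$, and boundedness of $\{\vx^{(k)}\}$ together with the uniform positive lower bound on $\|\vx^{(k+1)}\|_2$ (again from $\epsilon<\|\vb\|_2$) yields \textbf{(H2)} with a fixed $c_2$.

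Finally, \textbf{(H3)} is immediate: $\dom G$ is closed, so any accumulation point $\vx^*$ of the bounded sequence lies in $\dom G$, i.e. $\|\mA\vx^*-\vb\|_2\le\epsilon$; and $G = \|\cdot\|_1^2/\|\cdot\|_2^2$ restricted to $\dom G$ is continuous there (all iterates being bounded away from $\mathbf 0$), so $G(\vx^{(k_t)})\to G(\vx^*)$ along a convergent subsequence. With \textbf{(H1)}--\textbf{(H3)} and the KL property of $G$ in hand, the abstract theorem of \cite{Attouch-Bolte-Svaiter:MP:13} gives that the whole sequence $\{\vx^{(k)}\}$ converges to a single point $\vx^*$ with $\mathbf 0\in\partial G(\vx^*)$, i.e. a stationary point of $G$. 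The main obstacle I anticipate is bookkeeping the exact form of $\vomega^{(k+1)}$ in \textbf{(H2)} — matching the optimality inclusion of the linearized subproblem against the subdifferential formula \eqref{eq:partial-G} and isolating the residual as a difference of $\Phi$-values so that Lemma~\ref{lemma:lipschitz} applies cleanly — and making sure the standing assumptions guarantee $\inf_k\|\vx^{(k)}\|_2 > 0$, which every one of the three conditions quietly relies on.
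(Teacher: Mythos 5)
Your proposal is correct and follows essentially the same route as the paper's proof: it verifies \textbf{(H1)} via the sharper inequality inside Lemma~\ref{lemma:alpha-decreasing}, verifies \textbf{(H2)} by matching the Fermat-rule inclusion for the linearized subproblem against \eqref{eq:partial-G} to produce the residual $\tfrac{2}{\|\vx^{(k+1)}\|_2^2}\bigl(\Phi(\vx^{(k)})-\Phi(\vx^{(k+1)})\bigr)\in\partial G(\vx^{(k+1)})$ controlled by Lemma~\ref{lemma:lipschitz}, gets \textbf{(H3)} from boundedness and continuity of $G$ on its domain, and then invokes the KL convergence theorem of \cite{Attouch-Bolte-Svaiter:MP:13}. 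Your explicit remark that all three conditions need $\inf_k\|\vx^{(k)}\|_2>0$ (guaranteed when $\epsilon<\|\vb\|_2$) is a slightly more careful statement of the bound $M_1\le\|\vx^{(k)}\|_2$ that the paper simply asserts.
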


\begin{proof}\ \ Since $\{\vx^{(k)}\}$ is bounded and $\|\mA \vx^{(k)} -\vb\|_2\le \epsilon$, we know that $M_1 \le \|\vx^{(k)}\|_2 \le M_2$ for two numbers with $0<M_1\le M_2 <\infty$. From the proof of Lemma~\ref{lemma:alpha-decreasing}, we have
$$
\frac{\alpha^{(k)}}{\|\vx^{(k+1)}\|_2^2}\|\vx^{(k+1)}-\vx^{(k)}\|_2^2 \le \alpha^{(k)} -\alpha^{(k+1)}=G(\vx^{(k)})-G(\vx^{(k+1)}).
$$
Hence, condition \textbf{H1} holds with $c_1=M_2^{-2}$.

Next, we show that the sequence $\{\vx^{(k)}\}$ satisfies condition \textbf{H2}. By Fermat's rule for the convex optimization problem~\eqref{eqn:scheme-linearize}, there exists $\vs^{(k+1)} \in \partial \iota_{\mathcal{B}_\epsilon(\mathbf{0})}(\mA \vx^{(k+1)}-\vb)$ such that
$$
\mathbf{0} \in \partial \|\cdot\|_1^2(\vx^{(k+1)})-2\alpha^{(k)}\vx^{(k)}+\mA^\top \vs^{(k+1)}.
$$ 
Due to $\alpha^{(k)}=\frac{\|\vx^{(k)}\|_1^2}{\|\vx^{(k)}\|_2^2}$, one immediately has
\begin{equation}\label{thm:tmp1}
2\frac{\|\vx^{(k)}\|_1^2}{\|\vx^{(k)}\|_2^2}\vx^{(k)} \in \partial \|\cdot\|_1^2(\vx^{(k+1)})+\mA^\top \vs^{(k+1)}.
\end{equation}
By \eqref{eq:partial-G}, we have that
\begin{eqnarray} \nonumber
&&\frac{1}{\|\vx^{(k+1)}\|_2^2}\partial\|\cdot\|_1^2(\vx^{(k+1)})-2\frac{\|\vx^{(k+1)}\|_1^2}{\|\vx^{(k+1)}\|_2^4}\vx^{(k+1)} + \frac{\mA^\top \vs^{(k+1)}}{\|\vx^{(k+1)}\|_2^2} \\  \label{thm:tmp2}
&=&\partial\left(\frac{\|\cdot\|_1^2}{\|\cdot\|_2^2}\right)(\vx^{(k+1)})+ \frac{\mA^\top \vs^{(k+1)}}{\|\vx^{(k+1)}\|_2^2} \subset \partial G(\vx^{(k+1)}).
\end{eqnarray}
Combining~\eqref{thm:tmp1} and \eqref{thm:tmp2}, we have that
$$
\frac{2}{\|\vx^{(k+1)}\|_2^2}\left(\frac{\|\vx^{(k)}\|_1^2}{\|\vx^{(k)} \|_2^2}\vx^{(k)}-\frac{\|\vx^{(k+1)}\|_1^2}{\|\vx^{(k+1)}\|_2^2}\vx^{(k+1)}\right) \in \partial G(\vx^{(k+1)}).
$$
By Lemma~\ref{lemma:lipschitz} and the fact of that the norms of $\vx^{(k)}$ is bounded below by a positive number, we conclude
$$
\left\|\frac{2}{\|\vx^{(k+1)}\|_2^2}\left(\frac{\|\vx^{(k)}\|_1^2}{\|\vx^{(k)} \|_2^2}\vx^{(k)}-\frac{\|\vx^{(k+1)}\|_1^2}{\|\vx^{(k+1)}\|_2^2}\vx^{(k+1)}\right) \right\|_2 \le c \|\vx^{(k+1)}-\vx^{(k)}\|_2
$$
for some positive number $c$. Hence, the condition \textbf{H2} holds.

Since $\{\vx^{(k)}\}$ is bounded, there exists a subsequence $\{\vx^{(k_j)}\}$ and $\bar \vx$ such that $\vx^{(k_j)} \to \bar\vx$ as $j\to \infty$. Since $G$ is continuous on its domain, we conclude that $G(\vx^{(k_j)}) \to  G(\bar\vx)$ as $j\to \infty$. Hence, the condition \textbf{H3} holds.

Finally, since $G$ is a function having the KL property, then by \cite[Theorem 2.9]{Attouch-Bolte-Svaiter:MP:13} we conclude that $\{\vx^{(k)}\}$ converges to a stationary point of $G$.
\end{proof}

\subsection{Algorithms}

In this subsection, we aim to complete the iterative scheme of \eqref{eqn:scheme-linearize} by presenting two distinct approaches to solving the optimization problem in \eqref{eqn:scheme-linearize}.

\emph{The Quadratic Programming (QP) Approach:} The optimization problem in \eqref{eqn:scheme-linearize}, without the index $k$, is formulated as: 
\begin{equation}\label{eq:opt-sub}
\min \{\|\vx\|_1^2 - 2\alpha \langle \vc, \vx \rangle: \|\mA\vx-\vb\|_2\le \epsilon\}.
\end{equation}
This problem closely resembles the one presented in \eqref{model:Linerization}, which was previously reformulated as a typical LCQP if $\epsilon=0$ or QCQP if $\epsilon>0$,  described in the prevoius section. Given the abundance of well-established algorithms for quadratic programming, we opt for a straightforward selection to address problem~\eqref{eq:opt-sub}. For $\epsilon=0$, we employ the MATLAB function ``\texttt{quadprog}''. For the case of $\epsilon>0$, we use the Gurobi QCQP solver (https://www.gurobi.com/).

\emph{Alternating Direction Linearized Proximal Method of Multipliers (AD-LPMM) Approach:} To present this approach, we rewrite \eqref{eq:opt-sub} into an equivalent form as 
\begin{equation}\label{eq:opt-sub-equi}
\min \{\theta_1(\vx)+\theta_2(\mA \vx): \vx \in \mathbb{R}^n\},
\end{equation}
where 
$$
\theta_1(\cdot)=\|\cdot\|_1^2 - 2\alpha \langle \vc, \cdot \rangle, \quad \theta_2(\cdot)=\iota_{\mathcal{B}_\epsilon(\vb)}.
$$
Both $\theta_1$ and $\theta_2$ are convex; therefore, there are numerical methods that can be used for the problem~\eqref{eq:opt-sub-equi}, for example, see \cite{Chambolle-Pock:JMIV11,Li-Shen-Xu-Zhang:AiCM:15,Zhang-Burger-Osher:JSC:2011}. Here, we simply choose the alternating direction linearized proximal method of multipliers (AD-LPMM) approach presented in \cite[Chapter 15]{Beck:2017} and also see \cite{Chen-Shen-Suter-Xu:Eurasip:15}, that is, for given $\vx^{(0)} \in \mathbb{R}^n$,  $\vy^{(0)},\vz^{(0)} \in \mathbb{R}^m$, $\rho>0$, $\eta \ge \rho \lambda_{\max}(\mA^\top \mA)$, $\beta \ge \rho$, iterate 
\begin{align} \label{eq:AD-LPMM-epsilon}
\begin{cases}
\vx^{(j+1)}=\mathrm{prox}_{\frac{1}{\eta}\theta_1}\left(\vx^{(j)}-\frac{\rho}{\eta}\mA^\top\left(\mA\vx^{(j)}-\vz^{(j)}+\frac{1}{\rho}\vy^{(j)}\right)\right),   \\
\vz^{(j+1)}=\mathrm{prox}_{\frac{1}{\beta}\theta_2}\left(\vz^{(j)}+\frac{\rho}{\beta}\left(\mA\vx^{(j+1)}-\vz^{(j)}+\frac{1}{\rho}\vy^{(j)}\right)\right),   \\
\vy^{(j+1)}= \vy^{(j+1)}+\rho(\mA \vx^{(j+1)}-\vz^{(j+1)}).
\end{cases}
\end{align}
Here, for a proper convex function $f: \mathbb{R}^d \rightarrow \mathbb{R}$, $\mathrm{prox}_{f}(\vu)$ is the proximity operator of $f$ at the point $\vu\in \mathbb{R}^d$ defined as 
$$
\mathrm{prox}_{f}(\vu):=\arg\min\left\{f(\vv)+\frac{1}{2}\|\vv-\vu\|_2^2: \vv\in \mathbb{R}^d \right\}.
$$ 
Clearly, 
$$
\mathrm{prox}_{\frac{1}{\eta} \theta_1}(\vu)=\mathrm{prox}_{\frac{1}{\eta} \|\cdot\|_1^2}\left(\vu+\frac{2\alpha}{\eta}  \vc\right)
$$
and
$$
\mathrm{prox}_{\frac{1}{\beta} \theta_2}(\vu)=\vb + \min\left\{1, \frac{\epsilon}{\|\vu-\vb\|_2}\right\}(\vu-\vb).
$$
To adapt the AD-LPMM scheme to \eqref{eq:opt-sub}, we set $L=\lambda_{\max}(\mA^\top \mA)$ and select $\eta=\rho L$. Specifically, when $\epsilon=0$,   AD-LPMM scheme for \eqref{eq:opt-sub} becomes 
\begin{align} \label{eq:AD-LPMM}
\begin{cases}
        \vx^{(j+1)}=\mathrm{prox}_{\frac{1}{\rho L}\|\cdot\|_1^2}\left(\vx^{(j)}-\frac{1}{L}\mA^\top\left(\mA\vx^{(j)}-\vb+\frac{1}{\rho}\vy^{(j)}\right)+\frac{2\alpha}{\rho L} \vc\right), \\
        \vy^{(j+1)}= \vy^{(j+1)}+\rho(\mA \vx^{(j+1)}-\vb).
\end{cases}
\end{align}

Now, we are ready to present a complete algorithm for Problem~\eqref{model:square-epsilon}. Since this algorithm is essentially based on Dinkelbach's procedure, we refer to Algorithm~\ref{alg:ADLPMM} as {D-QP} if the step (a) in Algorithm~\ref{alg:ADLPMM} is performed with QP; otherwise, it is denoted as {D-LPMM}. 

\begin{algorithm}[H]
\label{alg:ADLPMM}
\caption{A complete algorithm for Problem~\eqref{model:square-epsilon}}

\SetKw{KwIni}{Initialization:}

\KwIn{A matrix $\mA \in \mathbb{R}^{m\times n}$ and a nonzero vector $\vb \in \mathbb{R}^m$}

\KwIni{choose $\vx^{(0)}$ with $\|\mA \vx-\vb\|_2\le \epsilon$, $\alpha^{(0)}=\frac{\|\vx^{(0)}\|_1^2}{\|\vx^{(0)}\|_2^2}$, $L=\lambda_{\max}(\mA^\top \mA)$, $\rho>0$}

\For{$k=0,1,2, \ldots$}{
\begin{itemize}
\item[(a)] finding $\vx^{(k+1)}$ through solving \eqref{eq:opt-sub} with $\vc=\vx^{(k)}$ by either QP or  AD-LPMM through \eqref{eq:AD-LPMM-epsilon};\\
\item[(b)] updating $\alpha^{(k+1)}=\frac{\|\vx^{(k+1)}\|_1^2}{\|\vx^{(k+1)}\|_2^2}$.
\end{itemize}
}

\KwOut{$\vx^{(k)}$}
\end{algorithm}

As the proximity operator $\mathrm{prox}_{\beta\|\cdot\|_1^2}$ is involved in \eqref{eq:AD-LPMM}, to conclude this section, we present an explicit method to compute this proximity operator using Algorithm~\ref{alg:positive}, which was developed in our recent work \cite{Prater-Shen-Tripp:ACHA:23}.

\begin{algorithm}[H]
\label{alg:positive}
\caption{A routine for computing $\mathrm{prox}_{\beta \|\cdot\|_1^2}(x)$ for a vector $\vx \in \mathbb{R}^n$.}

\SetAlgoVlined

\SetKw{KwIni}{Initialization:}

\KwIn{$\beta>0$ and a nonzero vector $\vx \in \mathbb{R}^n$}
\KwIni Rearranging the entries of $\vx$ according to its absolute values in the decreasing order through a signed permutation matrix $\mP$. Set $k=1$ and $r=\frac{x_1}{2\beta+1}$

\While{$k < n$}{
\eIf{$x_{k+1} \le 2\beta  r$}{
$$
u_i=\left\{
      \begin{array}{ll}
        x_i-2\beta r, & \hbox{for $i=1,\ldots,k$;} \\
        0, & \hbox{for $i=k+1,\ldots,n$.}
      \end{array}
    \right.
$$
break \;
}{
 update $k \leftarrow k+1$ and $r= \frac{\sum_{i=1}^k x_i}{2k\beta+1}$\;
}
}
\KwOut{$\mathrm{prox}_{\beta h_p}(\vx) \gets \mP^{-1}\vu$\;}
\end{algorithm}

\section{Numerical experiments}\label{sec:numerical}
In this section, we present numerical experiments to demonstrate the performance of our proposed algorithms, D-QP and D-LPMM, for sparse signal recovery using the $\tau_2$-model.    These experiments include a comparative analysis against recovery algorithms tailored for the $\ell_1/\ell_2$-model, under both noiseless and noisy measurement conditions. All numerical experiments are executed on a desktop equipped with an Intel i7-7700 CPU (4.20GHz) running MATLAB 9.8 (R2020a).

The performance of the tested algorithms will be evaluated using two distinct sensing matrices:

\smallskip
\noindent \emph{Oversampled discrete cosine transform (DCT) matrix:} We define the $m \times n$ sensing matrix $\mA$, where the $j$th column is given by:  
        \begin{equation*}
            \va_j := \frac{1}{\sqrt{m}}\cos\bigg(\frac{2\pi j \vw}{E}\bigg).
        \end{equation*}
Here, $\vw$ is a random vector following uniform distribution in $[0,1]^m$, and $E$ is a positive parameter controlling the coherence. A larger $E$ results in a more coherent matrix. This matrix is frequently used in various applications \cite{fannjiang2012coherence,Rahimi-Wang-Dong-Lou:SIAMSC:2019,Yin-Lou-He-Xin:SIAMSC:2015}, particularly in scenarios where standard $\ell_1$ models struggle due to high coherence. 

\smallskip
\noindent \emph{Gaussian matrix:} Here, the sensing matrix $\mA$ is generated based on multivariate normal distribution $\mathcal{N}(\mathbf{0}, \Sigma)$. For a number $r$ in the range of  $[0,1]$, the $(i,j)$th entry of the covariance matrix $\Sigma$ is specified as
$$
\Sigma_{ij}=\left\{
  \begin{array}{ll}
    1, & \hbox{if $i=j$;} \\
    r, & \hbox{otherwise.}
  \end{array}
\right.
$$
Here, a larger $r$ value indicates a more challenging problem in sparse recovery \cite{zhang2018minimization,Rahimi-Wang-Dong-Lou:SIAMSC:2019}.   In our experiments, the size of the sensing matrices $\mA$ is set to $64 \times 1024$.

The ground truth $\vx\in \mathbb{R}^n$ is simulated as an $s$-sparse signal, where $s$ is the number of nonzero entries in $\vx$. Following the suggestion in \cite{fannjiang2012coherence}, we ensure that the indices of nonzero entries are separated by at least $2E$. Specifically,  to achieve effective sparse recovery with matrices such as oversampled DCT, it is crucial that the nonzero elements of $\vx$ are adequately spaced. This required spacing, termed minimum separation and quantified as Rayleigh length (RL), was explored in \cite{candes2013super}. For our oversampled DCT matrices, the RL is designated as $E$. The empirical research by \cite{fannjiang2012coherence} establishes that a minimum separation of 2RL is essential for optimal sparse recovery. Intuitively, sparse spikes need to be further apart for more coherent matrices.

The magnitudes of nonzero entries are set differently according to the sensing matrix $\mA$. For $\mA$ being an oversampled DCT matrix, the dynamic range of a signal $\vx$ is defined as 
$$
\Theta(\vx)=\frac{\max\{|x_i|: i \in \mathrm{supp} (\vx)\}}{\min\{|x_i|: i \in \mathrm{supp} (\vx)\}},
$$
which can be controlled by an exponential factor $D$. In particular, a MATLAB command for generating those nonzero entries is 
$$\texttt{xg=sign(randn(s,1)).*10.\^{} (D*rand(s,1))},
$$ 
as used in \cite{Wang-Yan-Rahimi-Lou:IEEESP:2020}. The setting $D=3, 5$ and $7$ correspond to $\Theta \approx 10^3, 10^5$ and $10^7$, respectively. As demonstrated in \cite{lorenz2012constructing}, the dynamic range of the signal is a factor that influences the recovery performance. Therefore, we examine sparse signals with various dynamic ranges. For $\mA$ being a Gaussian random matrix, all $s$ nonzero entries of the sparse signal follow the Gaussian distribution $\mathcal{N}(0,1)$.

This section is divided into four subsections. In subsection \ref{subsec:decrease}, we numerically examine the values of the objective function for Problem~\eqref{model-Trad-FP-square-epsilon} across iterations in Algorithm~\ref{alg:ADLPMM} through D-QP and D-LPMM. These experiments help determine the appropriate maximum number of iterations for  D-QP and D-LPMM. Subsection \ref{subsec:noisefree} presents a numerical comparison of Algorithm~\ref{alg:ADLPMM} with existing algorithms for noiseless observations, while subsection \ref{subsec:noise} extends this comparison to scenarios with noisy observations. The final subsection evaluates Algorithm~\ref{alg:ADLPMM} in cases where the sensing matrices are rank-deficient.

\subsection{Objective Function of  Problem~\eqref{model-Trad-FP-square-epsilon} in Algorithm~\ref{alg:ADLPMM}}\label{subsec:decrease}

The primary objective of an algorithm for the $\tau_2$-model is to determine  $\alpha \in [1, n]$ such that $F(\alpha)=0$, where $F$ is defined in \eqref{model-Trad-FP-square-epsilon}. In our proposed Algorithm~\ref{alg:ADLPMM}, the value of $F(\alpha)$ is iteratively estimated by the objective function value $\|\vx^{(k)}\|_1^2-\alpha^{(k-1)}\|\vx^{(k)}\|_2^2$, which approximates $F(\alpha^{(k-1)})$.  A smaller value of this expression indicates better performance for Algorithm~\ref{alg:ADLPMM}. We plot the values of  $\|\vx^{(k)}\|_1^2-\alpha^{(k-1)}\|\vx^{(k)}\|_2^2$ against the iteration number $k$ in Figure~\ref{fig:Obj}. 

In the noiseless case, where $\epsilon=0$ in \eqref{model-Trad-FP-square-epsilon},  Figure~\ref{fig:Obj}(a) uses the 
oversampled DCT sensing matrix with parameters $E=1$, $D=3$ and $s=15$, while  Figure~\ref{fig:Obj}(b) uses the Gaussian sensing matrix with parameters $r=0.2$ and $s=10$. We observed that the value of $F$ by D-QP quickly drops to a small number within just two iterations, whereas  the value of $F$ using D-LPMM gradually decreases toward zero.  These results demonstrate the convergence of the proposed algorithms.  

In the noisy case, where $\epsilon>0$ in \eqref{model-Trad-FP-square-epsilon}, Figure~\ref{fig:Obj} (c) uses the oversampled DCT detection matrix with parameters $E=5$, $D=2$ and $s=8$, and Figure~\ref{fig:Obj} (d) uses the Gaussian detection matrix with parameters $r=0.2$ and $s=8$. The results are similar to those observed in the noiseless case. These experiments further suggest using a small number of iterations for D-QP and a relatively larger number of iterations for D-LPMM.

\begin{figure}[h]
\centering
\begin{tabular}{cc}
\includegraphics[width=2.2in]{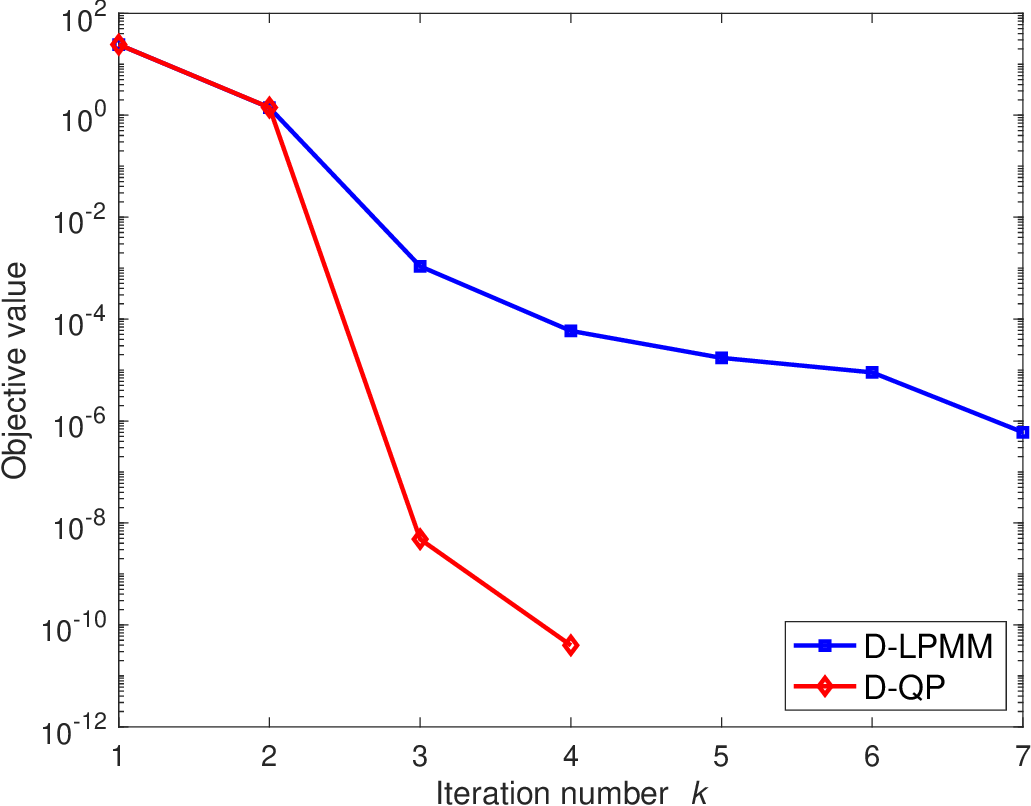}&
\includegraphics[width=2.2in]{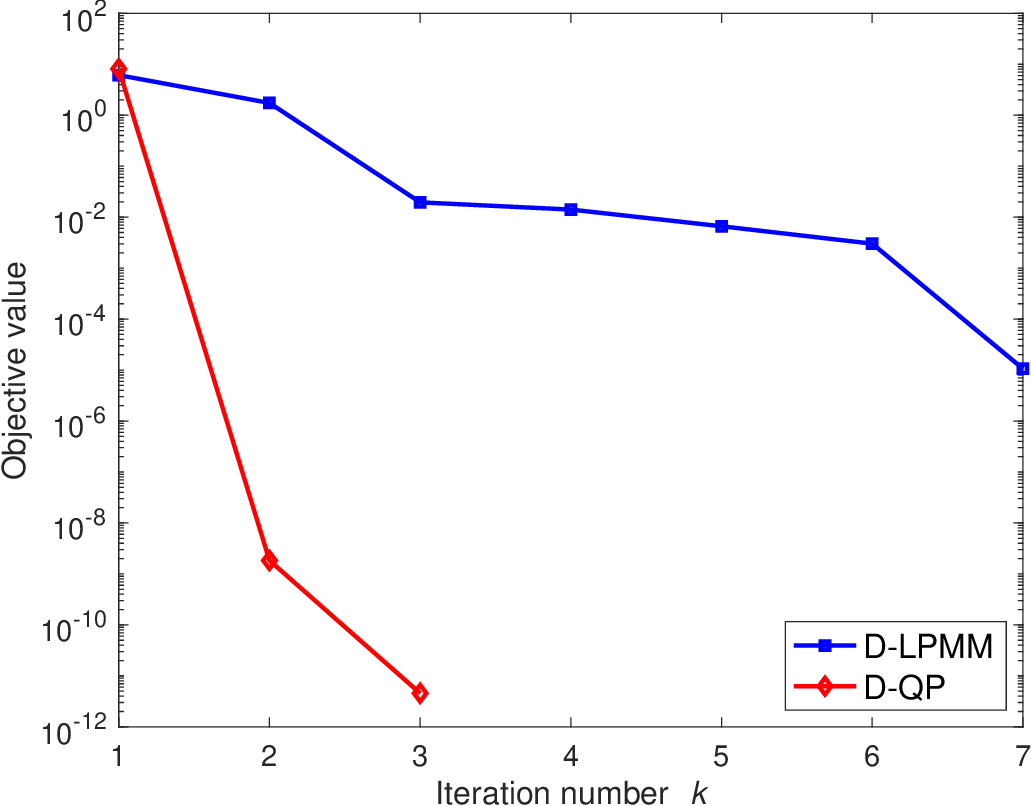}\\
(a)&(b)\\[0.1in]
\includegraphics[width=2.2in]{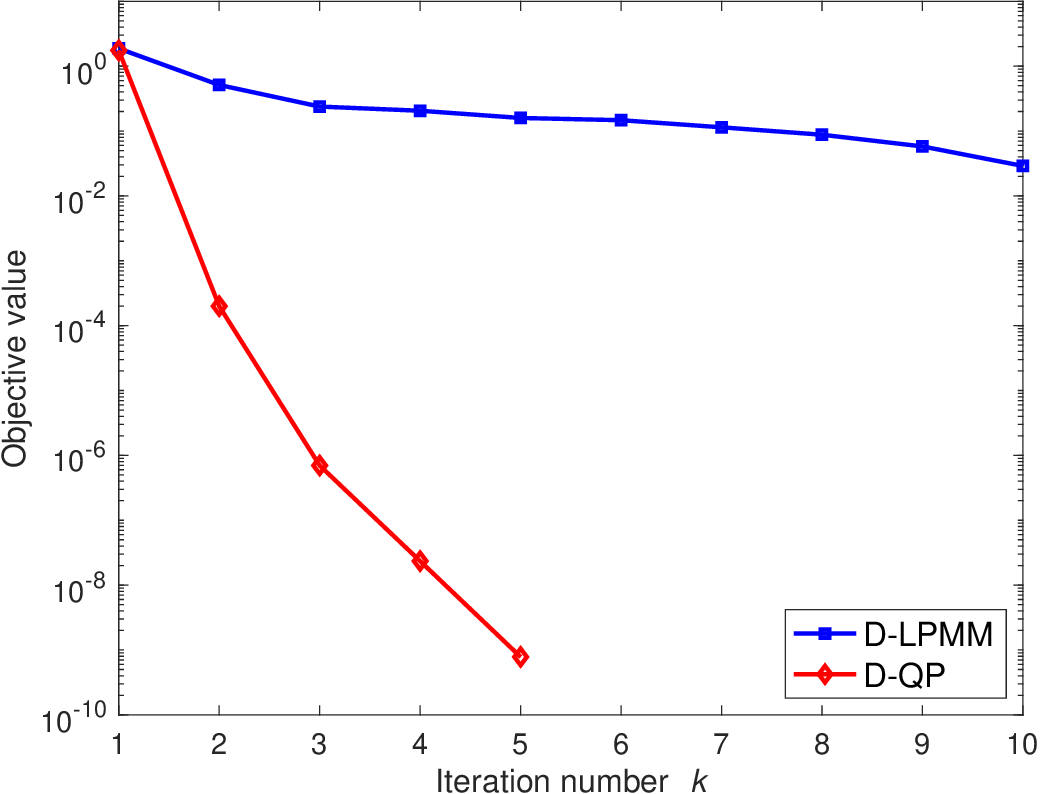}&
\includegraphics[width=2.2in]{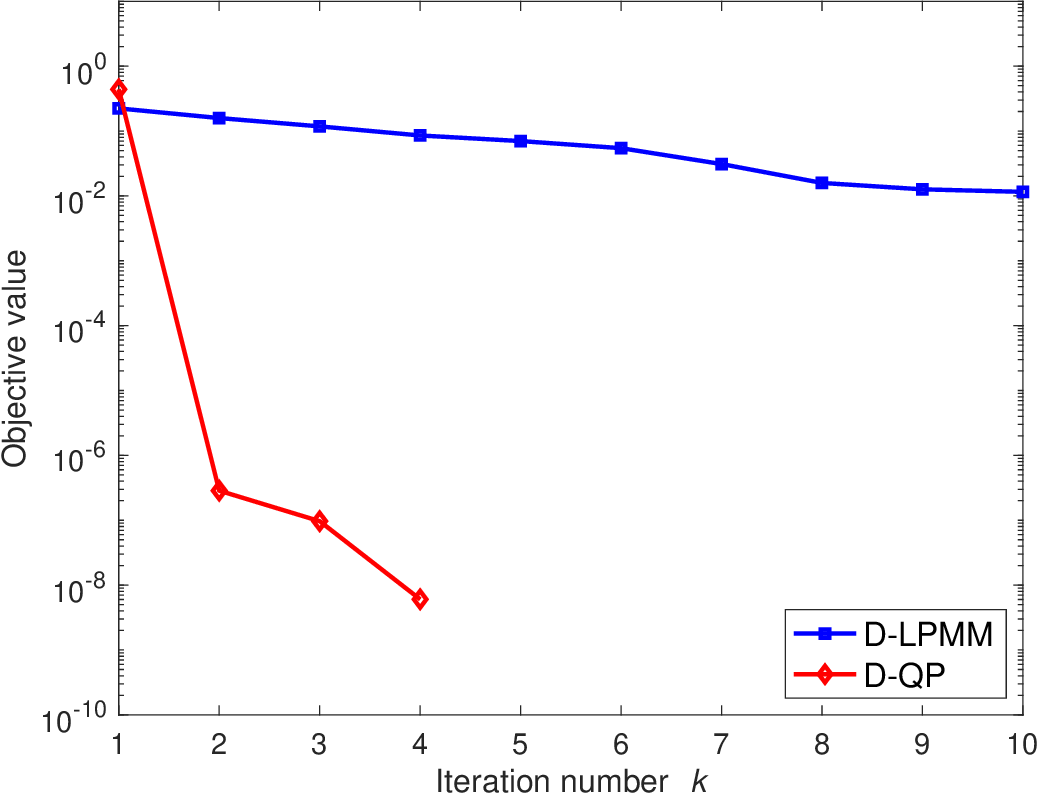}\\
(c)&(d)
\end{tabular}
\caption{Values of $\|\vx^{(k)}\|_1^2-\alpha^{(k-1)}\|\vx^{(k)}\|_2^2$ against the iteration number $k$. (a) Noiseless measurements using an oversampled DCT matrix with $E=1$, $D=3$, $s=15$; (b) Noiseless measurements using a Gaussian matrix with $r=0.2$, $s=10$; (c) Noisy measurements using an oversampled DCT matrix with $E=5$, $D=2$, $s=8$; (d) Noisy measurements using Gaussian matrix with $r=0.2$, $s=8$.}
\label{fig:Obj}
\end{figure}

\subsection{Algorithm Comparison: Noiseless Measurements}\label{subsec:noisefree}

This subsection presents the experimental results from noiseless measurements using the proposed D-QP and D-LPMM algorithms, both based on the $\tau_2$-model. We conduct a comparative analysis against the $L_1/L_2$-A1 and $L_1/L_2$-A2 algorithms, as proposed in \cite{Wang-Yan-Rahimi-Lou:IEEESP:2020}, which are designed for sparse signal recovery using the $\ell_1/\ell_2$-model.

Given the nature of  both the $\tau_2$-model and the $\ell_1/\ell_2$-model, the initial guess $\vx^{(0)}$ for an algorithm significantly influences the final result. For all algorithms tested, we use the $\ell_1$ solution obtained by Gurobi as the initial guess. Each algorithm terminates when the relative error between $\vx^{(k)}$ and $\vx^{(k-1)}$ is less than $10^{-6}$. 

In noiseless experiments, we assess the performance of sparse signal recovery algorithms from two perspectives: success rate and user-friendly implementation. 

The success rate is calculated as the number of successful trials divided by the total number of trials. A trial is considered successful if the relative error between the ground truth vector $\vx$ and the reconstructed solution $\vx^*$, i.e., ${\|\vx^*-\vx\|_2}/{\|\vx\|_2}$ is less than $10^{-3}$.  Figure \ref{fig:Alg-comp-DCT} illustrates the success rate for the oversampled DCT sensing matrix case with $E\in\{1, 10, 20\}$ and $D\in \{3, 5, 7\}$. Our proposed algorithms show performance comparable to $L_1/L_2$-A1 and $L_1/L_2$-A2. Consistent with the findings in \cite{Rahimi-Wang-Dong-Lou:SIAMSC:2019}, we observe that increased coherence results in enhanced sparse recovery performance. Furthermore, as discussed in \cite{Wang-Yan-Rahimi-Lou:IEEESP:2020}, algorithms that are scale-invariant generally exhibit improved success rates when operating across higher dynamic ranges. Similarly, Figure \ref{fig:Alg-comp-Gaussian} displays the success rate for the case of Gaussian sensing matrix with $r\in \{0.2, 0.5, 0.8\}$. In this case as well, all four algorithms demonstrate comparable success rates, each surpassing the recovery performance of the $\ell_1$ method in terms of success rate. Notably, the D-QP algorithm consistently achieves the best or second-best performance compared to other algorithms.

\begin{figure}[htp]
    \centering
    \begin{subfigure}{0.3\textwidth}
        \includegraphics[width=\linewidth]{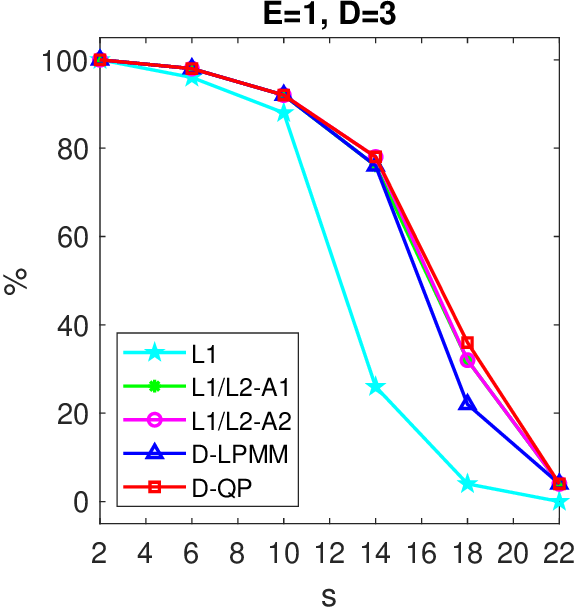}
    \end{subfigure}
    \hfill
    \begin{subfigure}{0.3\textwidth}
        \includegraphics[width=\linewidth]{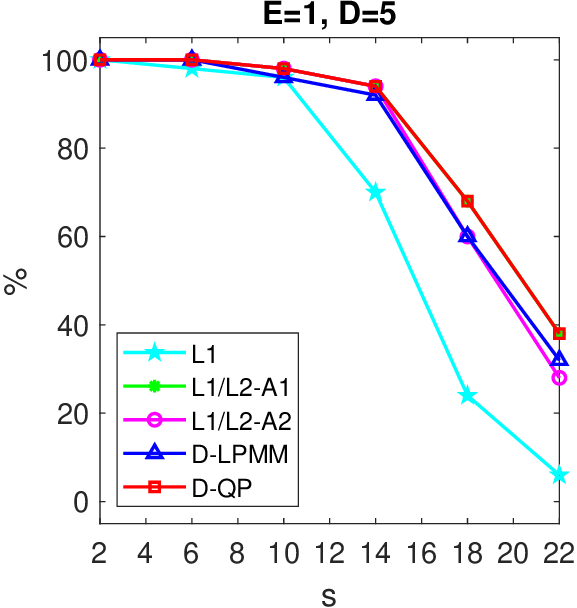}
    \end{subfigure}
    \hfill
    \begin{subfigure}{0.3\textwidth}
        \includegraphics[width=\linewidth]{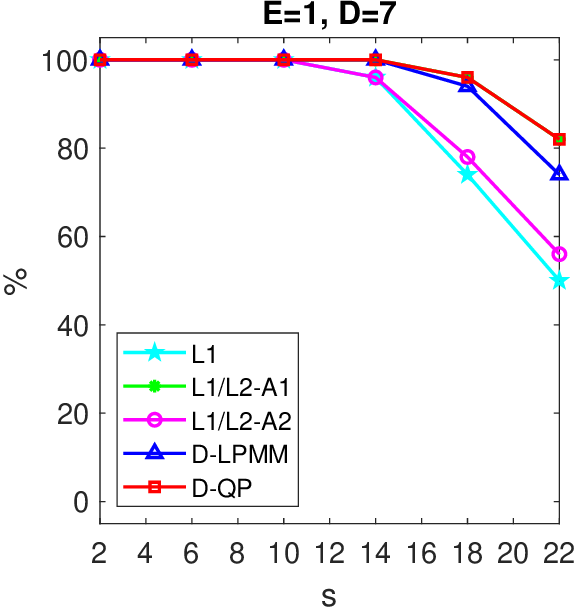}
    \end{subfigure}

    \medskip

    \begin{subfigure}{0.3\textwidth}
        \includegraphics[width=\linewidth]{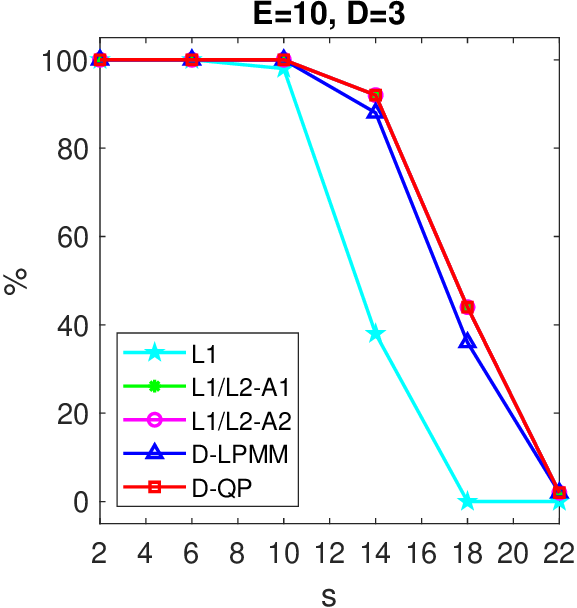}
    \end{subfigure}
    \hfill
    \begin{subfigure}{0.3\textwidth}
        \includegraphics[width=\linewidth]{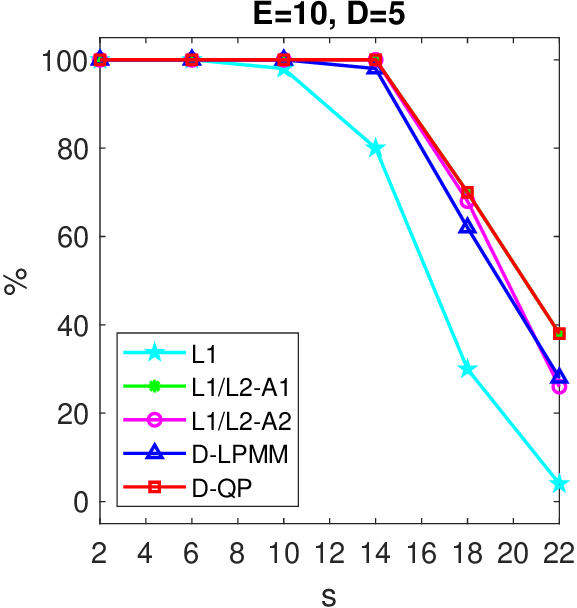}
    \end{subfigure}
    \hfill
    \begin{subfigure}{0.3\textwidth}
        \includegraphics[width=\linewidth]{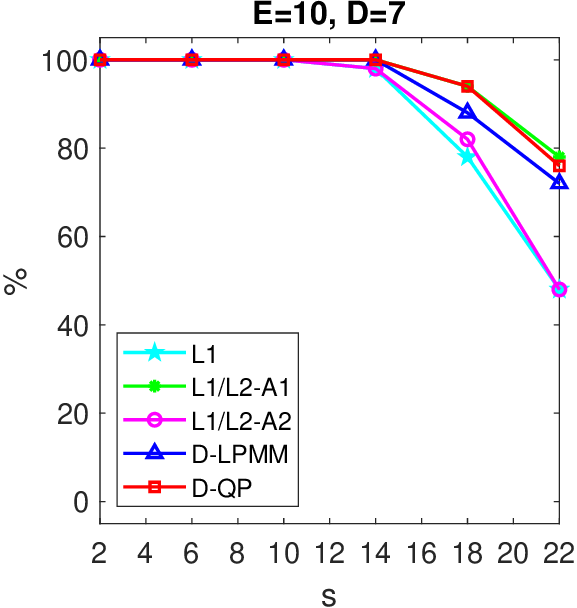}
    \end{subfigure}

    \medskip

    \begin{subfigure}{0.3\textwidth}
        \includegraphics[width=\linewidth]{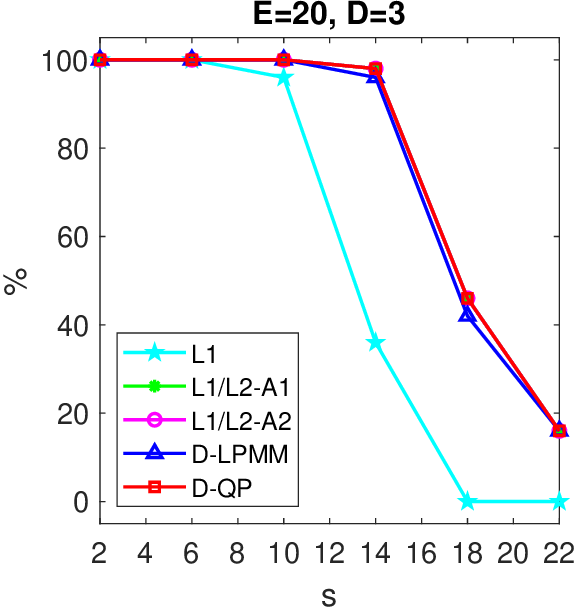}
    \end{subfigure}
    \hfill
    \begin{subfigure}{0.3\textwidth}
        \includegraphics[width=\linewidth]{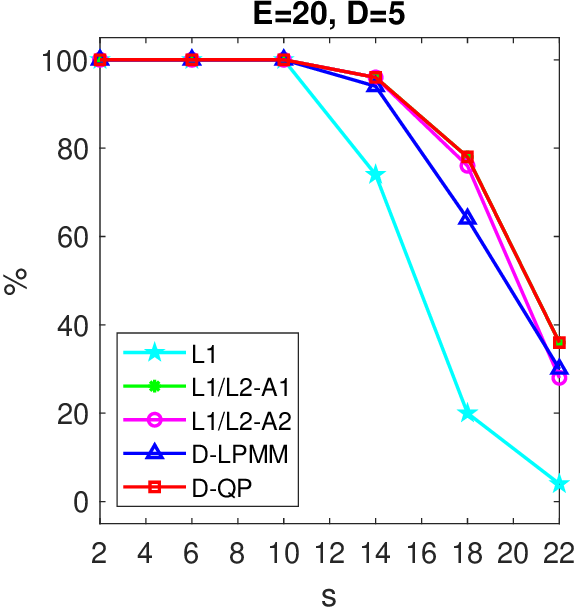}
    \end{subfigure}
    \hfill
    \begin{subfigure}{0.3\textwidth}
        \includegraphics[width=\linewidth]{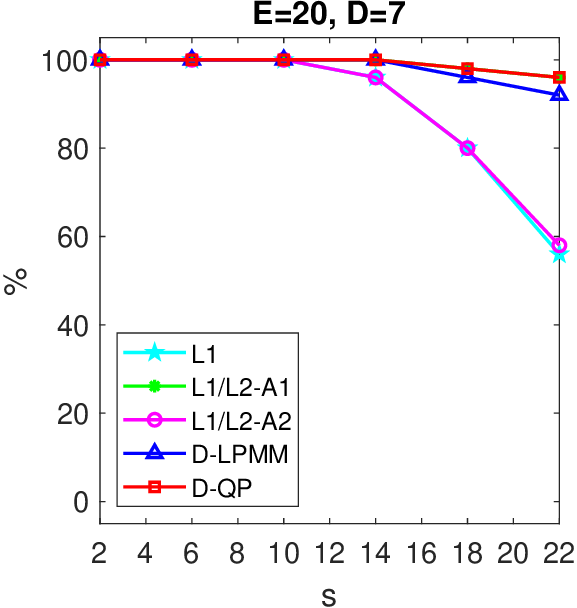}
    \end{subfigure}
    \caption{Algorithmic comparison of success rates is conducted for  DCT sensing matrices. The presentation of results is structured in a $3 \times 3$ grid format, where rows correspond to $E=1,10,20$ from top to bottom, and columns correspond to $D=3,5,7$ from left to right.}

    \label{fig:Alg-comp-DCT}
\end{figure}

For the user-friendly implementation, we emphasize two crucial factors. The first is the number of hyperparameters required by each algorithm. Algorithms that require fewer hyperparameters simplify the tuning process, enhancing accessibility, particularly for individuals with limited expertise in parameter optimization. This simplicity is vital as it broadens the algorithm's applicability across diverse scenarios without requiring intricate customization. The second factor is the computational complexity of the algorithms, primarily computational time. Computationally efficient algorithms are preferable in practical scenarios, capable of processing large datasets effectively and suitable for applications with constrained computational resources. These factors are essential in evaluating the algorithms' practicality and user-friendliness, especially in real-world applications where a balance between accuracy and efficiency is crucial.
\begin{figure}[h]
    \centering
    \begin{subfigure}{0.3\textwidth}
        \includegraphics[width=\linewidth]{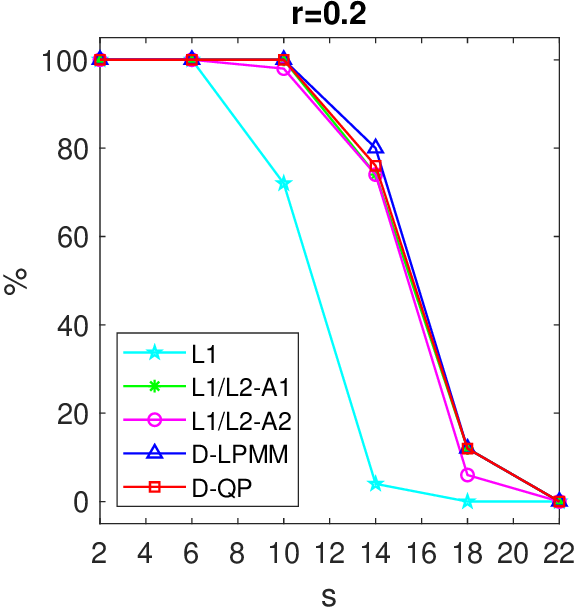}
    \end{subfigure}
    \hfill
    \begin{subfigure}{0.3\textwidth}
        \includegraphics[width=\linewidth]{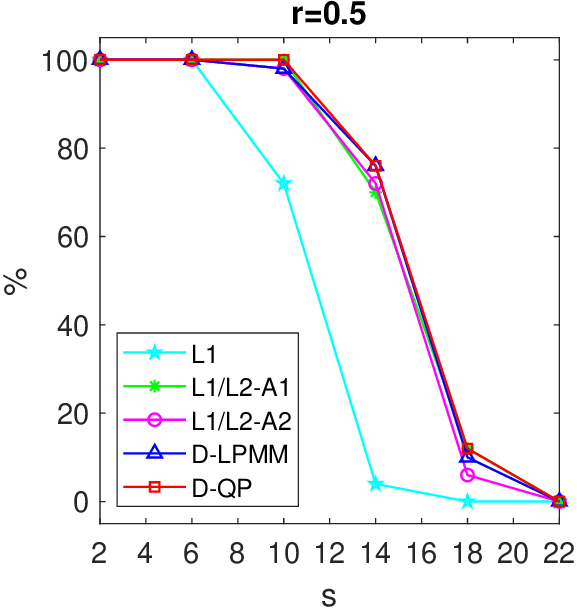}
    \end{subfigure}
    \hfill
    \begin{subfigure}{0.3\textwidth}
        \includegraphics[width=\linewidth]{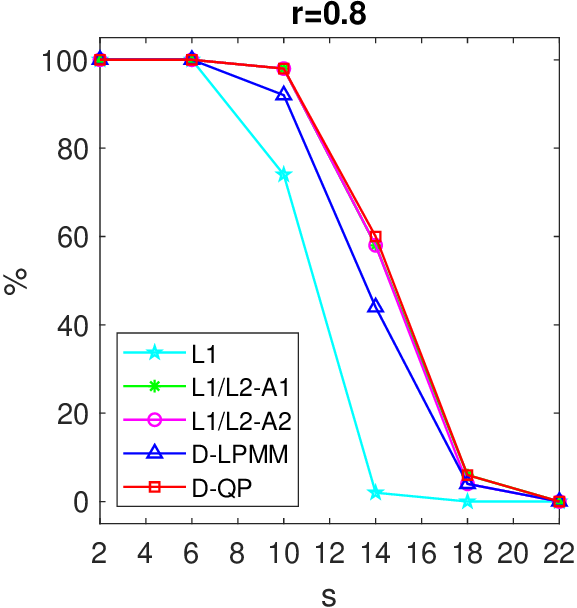}
    \end{subfigure}

    \caption{Algorithmic comparison of success rates is conducted for the Gaussian sensing matrices, with figures arranged from left to right for $r = 0.2, 0.5, 0.8$.}
    \label{fig:Alg-comp-Gaussian}
\end{figure}

The first pivotal factor we discussed for the user-friendly implementation is hyperparameters: D-LPMM was configured with a single parameter, $\rho$, set to 100 for the oversampled DCT sensing matrices and 2 for Gaussian matrices. In contrast, D-QP, grounded in quadratic programming, operates efficiently without the need for parameter tuning. In implementing the $L_1/L_2$-A2 approach, we adhered to the recommended configurations for the parameters $\beta$ and $\rho$ in \cite{Wang-Yan-Rahimi-Lou:IEEESP:2020}. However, $L_1/L_2$-A1, despite its linear programming base and apparent lack of parameters, is not without challenges. As previously discussed in Section~\ref{sec:prel}, it is susceptible to scenarios where the objective function becomes unbounded, resulting in the absence of a viable solution. A further limitation of $L_1/L_2$-A1 is its lack of a defined convergence analysis, casting doubt on its reliability in consistently reaching a critical point. The details of the parameter configurations used in our study are cataloged in Table~\ref{tab:parameter_setup}. This underscores the practical superiority of our proposed methods: D-QP's independence from parameterization and D-LPMM's minimalistic parameter requirements, which markedly simplify their usage compared to other algorithms that necessitate intricate parameter adjustments for different types of sensing matrices.

\begin{table}[ht]
\caption{Parameters setup for all testing algorithms.}
\label{tab:parameter_setup}
\centering
\begin{tabular}{c|cc}
\noalign{\hrule height 1pt}
Algorithm       & Oversampled DCT & Gaussian \\ \hline
\noalign{\hrule height 0.8pt}
D-QP          & Parameter-free  & Parameter-free  \\ 
D-LPMM        & \(\rho = 100\) & \(\rho = 2\) \\ 
$L_1/L_2$-A1 \cite{Wang-Yan-Rahimi-Lou:IEEESP:2020}         & Parameter-free & Parameter-free \\ 
$L_1/L_2$-A2 \cite{Wang-Yan-Rahimi-Lou:IEEESP:2020}         & $\beta=1$, $\rho=20$  & $\beta=10^{-5}$, $\rho=0.3$  \\ \hline
\noalign{\hrule height 1pt}
\end{tabular}
\end{table}

The second critical aspect of the user-friendly implementation is computational time. Table \ref{tab:average_computation_time} shows the computational time for the oversampled DCT sensing matrix with $(E,D)=(1, 3), (10, 5)$, and the Gaussian sensing matrix with $r=0.2, 0.5$. In terms of computational efficiency, both D-LPMM and D-QP show comparable performance, effectively balancing speed and ease of use. D-LPMM, with its minimal parameter tuning, and D-QP, operating without any parameters, both demonstrate swift processing capabilities. 

\begin{table}[ht] 
\caption{Average computational time (in seconds) for sparse signal recovery. }
\label{tab:average_computation_time}
\centering
\resizebox{\textwidth}{!}
{
\begin{tabular}{ccccc|cccc}
\hline
\noalign{\hrule height 1pt}
$s$       & $L_1/L_2$-A1 & $L_1/L_2$-A2 & D-LPMM & D-QP & $L_1/L_2$-A1 & $L_1/L_2$-A2 & D-LPMM & D-QP\\ 
\noalign{\hrule height 1pt}
&\multicolumn{4}{c|}{Oversampled DCT with $(E, D)= (1, 3)$} &\multicolumn{4}{c}{Oversampled DCT with $(E,D)=(10,5)$}\\ 
\hline
2       & 0.1777 & 0.1951 & 0.1419 & 0.7693 & 0.1857 & 0.1687 & 0.1497 & 0.8255\\
6       & 0.2004 & 0.1742 &	0.1939 & 0.8696 & 0.2012 & 0.1380 & 0.1903 & 0.8130\\
10      & 0.2245 & 0.2106 &	0.2417 & 0.9159 & 0.2214 & 0.1511 & 0.2340 & 0.8302\\
14      & 0.3297 & 0.3239 &	0.4639 & 1.5123 & 0.3167 & 0.1827 &	0.3385 & 1.6122\\
18      & 0.4993 & 1.3699 &	2.1894 & 3.3791 & 0.4228 & 0.2957 & 0.7576 & 3.9726\\
22      & 0.6525 & 2.7971 &	3.7691 & 6.1085 & 0.5427 & 0.6337 &	1.3844 & 6.5536\\
\noalign{\hrule height 0.5pt}
&\multicolumn{4}{c|}{Gaussian matrix with $r= 0.2$} &\multicolumn{4}{c}{Gaussian matrix with $r=0.5$}\\ 
\hline
2       & 0.1929 & 0.1309 & 0.2317 & 0.8362 & 0.1901 & 0.1284 & 0.2537 & 0.8299\\
6       & 0.1977 & 0.1342 &	0.4330 & 0.8376 & 0.1948 & 0.1361 & 1.0063 & 0.8348\\
10      & 0.2359 & 0.1968 &	0.6730 & 1.0305 & 0.2337 & 0.1984 & 1.5466 & 1.0345\\
14      & 0.6772 & 0.7952 &	2.0967 & 3.6477 & 0.5620 & 0.7556 &	3.1580 & 2.7525\\
18      & 1.4552 & 1.1727 &	4.3635 & 8.8977 & 1.3572 & 1.1323 & 4.7501 & 9.0134\\
22      & 1.9751 & 1.1301 &	4.7333 & 13.6945 & 1.8108 & 1.1501 & 4.8884 & 12.2452\\
\noalign{\hrule height 1pt}
\end{tabular}
}
\end{table}

\subsection{Algorithm Comparison: Noisy Measurements} \label{subsec:noise}
This subsection addresses the challenge of sparse signal recovery in the presence of white Gaussian noise using the proposed D-QP and D-LPMM algorithms, both based on the $\tau_2$-model.  We perform a comparative analysis against the MBA method described in \cite{Zeng-Yi-Pong:SIAMOP:2021}, which focuses on recovering sparse signals from noisy measurements via the $\ell_1/\ell_2$-model.

The experimental setup and test instances mirror those detailed in  \cite{Zeng-Yi-Pong:SIAMOP:2021}. Specifically, the oversampled DCT sensing matrix $\mA$ and the ground truth signal $\vx$ replicate noiseless experiment configurations. Observations $\vb$ are generated as $\vb = \mA\vx + 0.01 \xi$, where $\xi \in \R^m$ represents a vector with i.i.d. standard Gaussian entries. We define $\epsilon = 1.2 \|0.01\xi\|$.

The initial points for the algorithms compared in this subsection are identical to those used in \cite{Zeng-Yi-Pong:SIAMOP:2021} and are given by:
\begin{eqnarray*}
    \vx^{(0)} = \begin{cases}
        \mA^\dag\vb + \epsilon \frac{\vx_{\ell_1}-\mA^\dag \vb}{\|\mA \vx_{\ell_1}-\vb\|_2}, \quad\quad & \text{if }\|\mA\vx_{\ell_1}-\vb\|_2 > \epsilon \\
        \vx_{\ell_1} \quad & \text{otherwise,}
    \end{cases}
\end{eqnarray*}
where $\vx_{\ell_1}$ is computed using SPGL1 \cite{BergFriedlander:SIAMSC:2008} (version 2.1) with default settings, and $\mA^\dag$ is the pseudo-inverse of $\mA$. Note that $\vx^{(0)}$ meets the constraints of the problem. Experimentally, we vary $s\in\{4, 8, 12\}$, $E\in\{5, 15\}$, and $D\in\{2, 3\}$.

For MBA and D-LPMM, termination occurs when the relative error between successive iterations, $\vx^{(k)}$ and $\vx^{(k-1)}$, falls below $10^{-6}$, or when the iteration count exceeds $5n$. For D-QP, the criteria are aligned with the other methods for relative error, but differ in that termination occurs after fewer iterations, specifically 5, as suggested in subsection~\ref{subsec:decrease}. The parameters of the MBA method follow the recommendations specified in \cite{Zeng-Yi-Pong:SIAMOP:2021}.  For D-LPMM, we set $\beta=\rho=80$. In particular, D-QP operates without parameter dependencies.

Similarly to the noiseless experiments, we assess the algorithms' performance from two perspectives: relative error and user-friendly implementation. Table~\ref{tab:relerr-comp-noisy} presents the relative errors of the reconstructed solution $\vx^*$ to the ground truth $\vx$ averaged over 20 trials, i.e., $\|\vx^*-\vx\|_2/\|\vx\|_2$, along with computation times. 

Both the algorithms based on the $\ell_1/\ell_2$-model and the $\tau_2$-model exhibit lower recovery errors compared to the $\ell_1$ method. The algorithms D-LPMM, D-QP, and MBA yield comparable results in terms of relative error, with D-LPMM excelling in scenarios involving high coherence sensing matrices and high dynamic range signals.

Regarding user-friendly implementation, all three algorithms show comparable computation times. D-LPMM proves to be the fastest particularly for high coherence matrices and high dynamic range signals. In terms of parameters, D-QP operates efficiently without the need for parameter tuning. D-LPMM showcases robust performance even with a simplified parameter configuration. Although the algorithm technically has two parameters, $\beta$ and $\rho$, empirical evidence suggests that setting both parameters to the same value yields excellent results. Therefore, it effectively operates with a single adjustable parameter. Our methods, showcasing D-QP's independence from adjustments and D-LPMM's effective single-parameter setup, feature streamlined parameter settings that ensure easy adaptation across various sensing matrices.

\begin{table}[ht] 
\caption{Algorithm comparison for Gaussian noise signal recovery}
\centering
\resizebox{\textwidth}{!}
{
\begin{tabular}{ccc|cccc|cccc}
\hline
\noalign{\hrule height 1pt}
\multicolumn{3}{c|}{settings} &\multicolumn{4}{c|}{Relative error} &\multicolumn{4}{c}{Computational time (s)}\\ 
\hline
s       & E & D & spg$\ell_1$ & MBA & D-LPMM & D-QP & spg$\ell_1$ & MBA & D-LPMM & D-QP \\ 
\noalign{\hrule height 1pt}
\hline
4 & 5 & 2 & 5.165e-03  & 4.043e-03 & 4.074e-03 & 4.309e-03 & 0.04 & 0.01 & 0.11 & 5.12 \\
4 & 5 & 3 & 1.098e-03  & 9.323e-04 & 9.302e-04 & 2.708e-03 & 0.02 & 0.01 & 0.09 & 6.48  \\
4 & 15 & 2 & 3.978e-01  & 7.841e-02 & 7.841e-02 & 5.910e-02 & 0.06 & 0.57 & 0.17 & 6.48  \\
4 & 15 & 3 & 3.003e-01  & 9.362e-02 & 2.208e-03 & 3.709e-03 & 0.08 & 2.33 & 0.22 & 8.07 \\
\noalign{\hrule height 0.5pt}
\hline
8 & 5 & 2 & 3.247e-02  & 2.288e-03 & 2.313e-03 & 2.356e-03 & 0.06 & 0.11 & 0.57 & 5.93  \\
8 & 5 & 3 & 4.536e-03  & 6.573e-04 & 6.226e-04 & 4.088e-03 & 0.05 & 0.08 & 0.79 & 6.83  \\
8 & 15 & 2 & 4.599e-01  & 1.383e-01 & 1.499e-01 & 1.549e-01 & 0.07 & 1.99 & 1.09 & 7.41  \\
8 & 15 & 3 & 3.809e-01  & 2.891e-01 & 5.298e-02 & 9.629e-02 & 0.06 & 2.84 & 1.47 & 8.69  \\
\noalign{\hrule height 0.5pt}
12 & 5 & 2 & 1.340e-01  & 5.199e-02 & 3.639e-02 & 6.085e-02 & 0.05 & 0.82 & 1.53 & 6.16  \\
12 & 5 & 3 & 5.814e-02  & 3.855e-02 & 3.739e-03 & 7.724e-02 & 0.06 & 0.85 & 1.69 & 6.47 \\
12 & 15 & 2 & 5.208e-01  & 2.028e-01 & 1.880e-01 & 1.975e-01 & 0.07 & 3.32 & 2.57 & 8.25  \\
12 & 15 & 3 & 5.262e-01  & 5.059e-01 & 3.725e-01 & 1.195e-00 & 0.07 & 2.76 & 1.85 & 8.34  \\
\noalign{\hrule height 1pt}
\end{tabular}
}
\label{tab:relerr-comp-noisy}
\end{table}

\subsection{Rank-deficient Sensing Matrices}\label{subsec:Rank}
In this subsection, we evaluate the performance of the proposed algorithms, D-QP and D-LPMM, with rank-deficient sensing matrices. Although the theoretical analysis assumes that the sensing matrix $\mA$ is full-rank, this assumption may appear to significantly constrain the applicability of the method. However, it is important to note that, in practice, the proposed Algorithm~\ref{alg:ADLPMM} remains effective even for rank-deficient sensing matrices. This is because the computations involved in the quadratic programming and D-LPMM algorithms do not require $\mA$ to be full rank. Note that the $L1/L2$-A2 algorithm requires the full rankness of $\mA$. 

In our experiment, we generate a $69 \times 1024$ sensing matrix $\mA$ in two different ways. First, we create an oversampled DCT matrix $64 \times 1024$ with $E=10$. Then, we augment this matrix in rows by randomly selecting $5$ rows from the same matrix or using linear combinations of these selected rows.

Figure~\ref{fig:rank-deficient} displays the success rate in $50$ tests for the noiseless case, demonstrating that both the D-QP and D-LPMM algorithms perform well, even when the sensing matrices are rank-deficient in these test cases.

\begin{figure}[h]
\centering
\begin{tabular}{cc}
\includegraphics[width=2.2in]{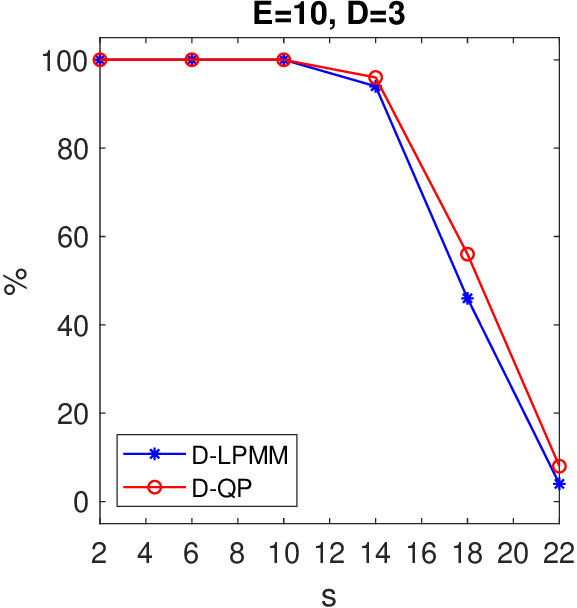}&
\includegraphics[width=2.2in]{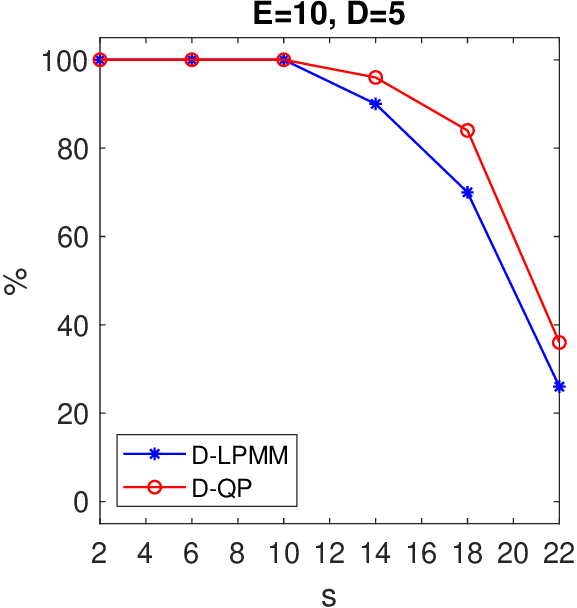}
\end{tabular}
\caption{The success rates of D-QP and D-LPMM using rank-deficient oversampled DCT sensing matrices of size $69 \times 1024$ with $E=10$. The tested sparse signals are generated with $D=3$ and $D=5$. (Left) Results where the last $5$ rows of $\mA$ are randomly selected from the first $64$ rows. (Right) Results where the last $5$ rows of $\mA$ are linear combinations of randomly selected rows from the first $64$ rows.
}
\label{fig:rank-deficient}
\end{figure}

\section{Conclusion}\label{sec:conclusion}
In this work, we introduce the $\tau_2$-model, a novel approach to sparse signal recovery, which effectively addresses the limitations of the $\ell_0$ norm models utilizing the square of $\ell_1/\ell_2$ norms. Our study was centered on a thorough exploration of the properties of the $\tau_2$-model, delving deep into the intricacies of the corresponding optimization problem, fundamentally rooted in Dinkelbach's procedure. Through rigorous theoretical analysis and numerical experiments, we have validated the model's effectiveness in sparse signal recovery. Our numerical experiments highlighted the impact of algorithms developed for the $\tau_2$-model on the recovered signals.

In our future work, we plan to explore several promising projects. First, our objective is to conduct a more comprehensive study by improving D-QP and D-LPMM and incorporating other fractional programming algorithms, such as those suggested in \cite{Li-Shen-Zhang-Zhou:ACHA-2022,Zhang-Li:SIAMOP:2022}. Second,  we plan to investigate a general optimization model as follows:
\begin{equation}\label{model:square:q}
\arg\inf\left\{\frac{\|\vx\|_1^q}{\|\vx\|_2^q}: \|\mA \vx- \vb\|_2 \le \epsilon, \; \vx\in \mathbb{R}^n\right\}
\end{equation}
for any $q\ge 1$. Clearly, the set of solutions for model~\eqref{model:square:q} is identical to that for the ${\ell_1/\ell_2}$-model and the $\tau_2$-model. Theoretically, for $q>1$  model~\eqref{model:square:q} can be solved using the same procedure as for $q=2$. Since the proximity operator of $\|\cdot\|_1^q$ can be efficiently computed for $q=2,3,4$ (see our previous work in [22]), the corresponding optimization can be efficiently solved. We expect results similar to those in Theorem~\ref{thm:convergence} for the algorithm applied to this general model.

\begin{bluetext}

\end{bluetext}

\section*{Acknowledgement} 
The work of L. Shen was supported in part by the National Science Foundation under grant DMS-2208385 and by the Air Force Summer Faculty Fellowship Program (SFFP). Approved for public release on February 26, 2024, case number: AFRL-2024-1037. The authors gratefully acknowledge the constructive comments and suggestions from two reviewers, which significantly enhanced the quality of this paper.

\section*{Conflict of interest}
The authors declare that they have no conflict of interest.  Any opinions, findings and conclusions or recommendations expressed in this material are
those of the authors and do not necessarily reflect the views of AFRL (Air Force Research Laboratory).

\bibliographystyle{spmpsci}
\bibliography{shen_jia_2023}

\end{document}